\g@addto@macro\bfseries{\boldmath}
\def\th@plain{%
	\thm@notefont{}
	\itshape 
}
\def\th@definition{%
	\thm@notefont{}
	\normalfont 
}
\tikzset{VertexStyle/.append style = { minimum size = 4pt }}	
\tikzset{EdgeStyle/.style = {-,thick}}
\newtheorem{theorem}{Theorem}[section]
\newtheorem{sublemma}[theorem]{Sublemma}
\newtheorem{mainthm}[theorem]{Main Theorem}
\newtheorem{lemma}[theorem]{Lemma} 
\newtheorem{proposition}[theorem]{Proposition} 
\newtheorem{corollary}[theorem]{Corollary}
\newtheorem{remark}[theorem]{Remark}
\theoremstyle{definition}
\newtheorem{definition}[theorem]{Definition} 
\newtheorem*{ack}{Acknowledgments}
\newtheorem*{org}{Organization of the Paper}
\newtheorem*{mainproof}{Proof of Main Theorem \ref{main}}
\def\rrr{\mathbb{R}}
\def\ccc{\mathbb{C}}
\def\zzz{\mathbb{Z}}
\def\d{\partial}
\def\mstar{M^*}
\def\fstar{F^*}
\def\estar{E^*}
\def\ra{\rightarrow}
\DeclareMathOperator{\xt}{xt}
\DeclareMathOperator{\dist}{dist}
\DeclareMathOperator{\diam}{diam}
\DeclareMathOperator{\curv}{curv}
\DeclareMathOperator{\Fix}{Fix}
\let\originalleft\left
\let\originalright\right
\renewcommand{\left}{\mathopen{}\mathclose\bgroup\originalleft}
\renewcommand{\right}{\aftergroup\egroup\originalright}
\begin{document}

\title[Almost non-negatively curved $4$-manifolds with torus symmetry]{Almost non-negatively curved $4$-manifolds with torus symmetry}

\author[Harvey]{John Harvey}
\address{Department of Mathematics, Swansea University, Fabian Way, Swansea, SA1 8EN, UK.}
\email{j.m.harvey@swansea.ac.uk}

\author[Searle]{Catherine Searle}
\address{Department of Mathematics, Statistics and Physics, Wichita State University, Wichita, KS 67260, USA.}
\email{searle@math.wichita.edu}

\subjclass[2010]{Primary: 53C23; Secondary: 51K10, 53C20}

\date{\today}

\begin{abstract} We prove that if a closed, smooth, simply-connected 4-manifold with a circle action admits an almost non-negatively curved sequence of invariant Riemannian metrics, then it also admits a non-negatively curved Riemannian metric invariant with respect to the same action. The same is shown for torus actions of higher rank, giving a classification of closed, smooth, simply-connected 4-manifolds of almost non-negative curvature under the assumption of torus symmetry.
\end{abstract}
\maketitle

\section{Introduction}

 The class of almost non-negatively curved manifolds contains precisely those manifolds which admit Riemannian metrics with a negative lower sectional curvature bound arbitrarily close to zero while maintaining an upper diameter bound. This is identical to the class of manifolds which collapse to a point with a lower sectional curvature bound. Understanding better the structure of almost non-negatively curved manifolds would be an important step toward a general theory of collapse with a lower curvature bound to any limit space. 
 Unfortunately, such manifolds are classified only  
in dimensions $3$ or lower, 
with the result in dimension $3$ due to Shioya and Yamaguchi \cite{SYa}. In dimension $2$, the result follows by the Gauss--Bonnet Theorem and is the same as the classification for non-negative curvature. 

Yamaguchi \cite{Ya} proved that for a manifold, $M$, of almost non-negative Ricci curvature, a finite cover of $M$ 
fibers over a $b_1(M)$-dimensional torus and, in the case where $b_1(M)=n$, $M$ is diffeomorphic to a torus. 

Thus, given the classification of almost non-negatively curved manifolds of dimension less than or equal to $3$, the only interesting case to understand in dimension $4$ is that of $b_1(M)=0$.  

The Grove Symmetry Program aims to classify manifolds with a lower curvature bound by assuming a certain amount of symmetry. Applying this principle to $4$-manifolds of almost non-negative curvature, we prove the following theorem.

\begin{mainthm}\label{main}
Let $S^1$  act
smoothly and effectively  on a closed, smooth, simply-connected $4$-manifold $M$. Let $\left\{
g_{n }\right\} _{n=1}^{\infty }$ be a sequence of Riemannian metrics on 
$M$ for which the $S^1$ action is isometric and suppose that $\{(M, g_{n})\}_{n=1}^{\infty}$ is almost non-negatively curved. Then $M$ admits a metric of non-negative curvature invariant under the same action.
\end{mainthm}

Bott has conjectured that all closed, simply-connected, non-negatively curved manifolds are rationally elliptic, and Grove has proposed that the conjecture might continue to hold in the case of almost non-negative curvature. This result provides evidence for extending the conjecture.

Recall that isometric $S^1$ actions on closed, simply-connected, non-negatively curved $4$-manifolds are classified by work of  Hsiang and Kleiner \cite{HK}, Kleiner \cite{K}, Searle and Yang \cite{SY}, Galaz-Garc\'ia \cite{GG1}, 
Galaz-Garc\'ia and Kerin \cite{GGK}, Grove and Searle \cite{GS}, and Grove and Wilking \cite{GW} as follows.

\begin{theorem}\cite{HK, K, SY, GG1, GGK, GS, GW}
Let $M$ be a closed, simply-connected, non-negatively curved 4-dimensional manifold with an isometric and effective $S^1$ action. Then $M$ is equivariantly diffeomorphic to $S^4$ or  $\ccc P^2$ with a linear $S^1$ action  or equivariantly diffeomorphic to one of $S^2\times S^2$ or $\ccc P^2\# \pm \ccc P^2$ with an $S^1$ sub-action of  a $T^2$ action induced by the standard $T^4$ action on $S^3\times S^3$.
\end{theorem}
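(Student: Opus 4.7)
The plan is to assemble the cited results into a full equivariant classification. A preliminary reduction: the non-equivariant classification of closed, simply-connected, non-negatively curved $4$-manifolds (assembling Kleiner, Searle-Yang, Galaz-Garc\'ia, and Galaz-Garc\'ia-Kerin) forces $M$ to be diffeomorphic to $S^4$, $\ccc P^2$, $S^2 \times S^2$, or $\ccc P^2 \# \pm \ccc P^2$, so the remaining task is to match the isometric $S^1$-action on each model against the list.

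My approach would be to study the quotient $M^* = M/S^1$, which carries the structure of a $3$-dimensional non-negatively curved Alexandrov space. Using simple-connectedness of $M$ together with the general structure of smooth $S^1$-actions on simply-connected $4$-manifolds (Fintushel), $M^*$ is homeomorphic to $S^3$; the fixed-point set is finite, and the non-principal orbits with finite cyclic isotropy project to a finite weighted graph embedded in $S^3$. Fintushel's equivariant classification then reduces the problem to cataloguing the admissible weighted graphs, so what remains is to rule out all but the standard ones using the curvature hypothesis.

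For $M = S^4$ or $\ccc P^2$ the $S^1$-action is forced to be linear by the positive-curvature rigidity of Hsiang-Kleiner together with Kleiner's extension to the non-negatively curved setting. For $M = S^2 \times S^2$ or $\ccc P^2 \# \pm \ccc P^2$ I would show, following Galaz-Garc\'ia and Galaz-Garc\'ia-Kerin, that the isometric $S^1$-action extends to an isometric $T^2$-action, and then invoke the classical Orlik-Raymond classification of $T^2$-actions on simply-connected $4$-manifolds together with Grove-Searle's rigidity for maximal symmetry rank to identify the resulting $T^2$-action as a subaction of the standard $T^4$-action on $S^3 \times S^3$ descending to $M$.

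The main obstacle is the extension step in the product and connected-sum cases. Without producing a second commuting isometric circle, one only knows $M$ diffeomorphically and cannot pin down the equivariant type of the $S^1$-action. Producing this extra symmetry relies on a detailed soul-theoretic analysis of the Alexandrov $3$-space $M^*$ and on the structural information about singular orbits supplied by Grove-Wilking, and is the most delicate link in the chain of cited results.
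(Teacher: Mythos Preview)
The paper does not supply its own proof of this theorem; it is stated as a known classification assembled from the cited references and used as background for the Main Theorem. So there is no argument in the paper to compare against directly. That said, the paper's proof of its own Main Theorem (the almost non-negative analogue) mirrors the structure of the literature proof, and against that template your sketch has a genuine gap.

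You assert that Fintushel's analysis forces $M^* \cong S^3$ with finite fixed-point set. This is false in general: the fixed-point set of a smooth $S^1$-action on a $4$-manifold can have $2$-dimensional components, and in that case $\partial M^* \neq \emptyset$ with $\partial M^* \subset F^*$ (this is exactly the content of the paper's Proposition~\ref{p:Fint}, part (1)). Your outline therefore omits the entire fixed-point-homogeneous case. This is not a minor omission: many of the model actions in the statement---for instance the linear rotation of $S^4$ about an equatorial $S^2$, or the standard actions on $\ccc P^2$ fixing a $\ccc P^1$---are of this type. The cited reference \cite{GG1} (Galaz-Garc\'ia) is on the list precisely to cover this case, and the paper itself devotes Section~\ref{s4} to the analogous case in almost non-negative curvature, where $M^*$ is $D^3$ or $S^2 \times I$.

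A second, smaller issue: even within the isolated-fixed-point case, your plan to ``extend the $S^1$-action to a $T^2$-action'' on $S^2\times S^2$ and $\ccc P^2\#\pm\ccc P^2$ inverts the logic of the cited work. Grove--Wilking \cite{GW} do not produce a second commuting circle directly; they analyze the weighted orbit space $M^*\cong S^3$, show via their knot characterization (the paper's Lemma~\ref{ram} and the double-branched-cover argument) that the singular set is an unknotted circle through all fixed points, and then reconstruct $M$ as a union of disk bundles. The $T^2$-extension is a consequence of that reconstruction, not the mechanism. Your ``main obstacle'' paragraph identifies this as delicate, but the soul-theoretic route you propose is not how the argument actually runs.
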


The Main Theorem \ref{main} shows that this classification continues to hold in the case of almost non-negative curvature. The principal challenge in extending the result to almost non-negative curvature is in the proof of Lemma \ref{Xst}, where we rule out the possibility that five points are fixed. Any $S^1$ action on a closed, simply-connected, non-negatively curved 4-manifold which fixes five points can be shown to satisfy some very rigid geometric conditions, which yield a contradiction. The approach in almost non-negative curvature is necessarily very different.

\begin{mainproof}
	We recall that for smooth circle actions, the fixed-point set of the circle action, $\Fix(M; S^1)$, is of even codimension and that the Euler characteristic  $\chi(\Fix(M; S^1))=\chi(M^4)$ by work of Kobayashi \cite{Ko}. Since $M$ is closed and simply connected it follows that $\chi(M)>0$. Thus any circle action will have non-empty fixed-point set. The Main Theorem \ref{main}  is proven by considering two cases: Case 1,  where the action fixes only isolated fixed points, and Case 2, where the circle action is fixed-point homogeneous, that is, there is a codimension-two fixed-point set of the circle action. 
	We prove Case 1 in Proposition \ref{p:isolated} and Case 2 in Proposition \ref {FPHcase}. \qed
\end{mainproof}

We can generalize this result to classify all torus actions on almost non-negatively curved manifolds. The classification in the case of non-negative curvature continues to hold.

\begin{corollary}
Let $T^k$  act
smoothly and effectively  on a closed, smooth, simply-connected $4$-manifold $M$. Let $\left\{
g_{n }\right\} _{n=1}^{\infty }$ be a sequence of Riemannian metrics on 
$M$ for which the $T^k$ action is isometric and suppose that $\{(M, g_{n})\}_{n=1}^{\infty}$ is almost non-negatively curved. Then $k\leq 2$ and $M$ admits a metric of non-negative curvature invariant under the same action.

In particular, when $k=2$, $M$ is equivariantly diffeomorphic to one of $S^4$ or  $\ccc P^2$ with a linear $T^2$ action  or one of $S^2\times S^2$ or $\ccc P^2\# \pm \ccc P^2$ with a $T^2$ action induced by the standard $T^4$ action on $S^3\times S^3$, and when $k=1$, 
it is a sub-action of any of these.
\end{corollary}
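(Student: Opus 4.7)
The plan is to bootstrap from the Main Theorem \ref{main} by restricting the $T^k$ action to a circle subgroup, and then to invoke the classical Orlik--Raymond classification of smooth torus actions on simply-connected $4$-manifolds. First, I would fix any non-trivial circle subgroup $S^1\subseteq T^k$. Since $T^k$ acts effectively on $M$, no non-trivial element of $T^k$ acts as the identity, and so this $S^1$ subgroup also acts effectively. The metrics $g_n$ are automatically $S^1$-invariant, so $(M,g_n)$ is an almost non-negatively curved sequence of $S^1$-invariant Riemannian metrics, and the Main Theorem \ref{main} applies. Combined with the classification theorem quoted above for non-negatively curved, simply-connected $S^1$-manifolds in dimension four, this forces $M$ to be diffeomorphic to one of $S^4$, $\ccc P^2$, $S^2\times S^2$, or $\ccc P^2 \# \pm \ccc P^2$.

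Next, to obtain the bound $k\leq 2$ and to pin down the $T^k$-action when $k=2$, I would appeal to the classical theory of effective smooth torus actions on closed simply-connected $4$-manifolds due to Orlik--Raymond (building on Mostert). This yields two facts: first, there is no effective smooth torus action of rank $\geq 3$ on a closed simply-connected $4$-manifold, which immediately gives $k\leq 2$; and second, effective $T^2$ actions on simply-connected $4$-manifolds are classified up to equivariant diffeomorphism by their weighted orbit spaces, and on each of the four manifolds in our list the possible $T^2$ actions reduce (up to reversal of orientation) to the standard ones described in the statement. Pulling back the $T^2$-invariant non-negatively curved metric from the standard model --- the round metric on $S^4$, the Fubini--Study metric on $\ccc P^2$, a product of round metrics on $S^2\times S^2$, and a $T^2$-invariant metric induced by the $T^4$ action on $S^3\times S^3$ in the remaining cases --- then supplies the required $T^k$-invariant metric of non-negative curvature on $M$.

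The case $k=1$ is immediate from the Main Theorem \ref{main} together with the classification theorem above, and the sub-action claim follows because in each entry of that classification the $S^1$ action embeds in the corresponding standard $T^2$ action (for $S^4$ and $\ccc P^2$ because any circle in $SO(5)$ or $PU(3)$ lies in a maximal torus; for $S^2\times S^2$ and $\ccc P^2 \# \pm \ccc P^2$ it is built into the statement of the classification). The principal obstacle in this approach is that the Main Theorem only produces an $S^1$-invariant non-negatively curved metric and does not by itself upgrade the symmetry group from $S^1$ to $T^k$; it is precisely the Orlik--Raymond classification that lets us replace $M$ with a concrete equivariant model on which a genuinely $T^2$-invariant non-negatively curved metric exists, and then transport that metric back to $M$.
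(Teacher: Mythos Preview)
Your approach is essentially the same as the paper's: restrict to a circle subgroup, apply the Main Theorem to identify the diffeomorphism type, invoke a rank bound, and then use the known classification of smooth $T^2$ actions on the four candidate manifolds to obtain a $T^2$-invariant non-negatively curved metric. The only substantive differences are in attribution: the paper cites Parker \cite{Par} rather than Orlik--Raymond for the bound $k\leq 2$, and for $S^2\times S^2$ and $\ccc P^2\#\pm\ccc P^2$ it invokes Galaz-Garc\'ia--Kerin \cite{GGK} (not just Orlik--Raymond) for the fact that every smooth $T^2$ action on these manifolds arises from the $T^4$ action on $S^3\times S^3$ and hence carries an invariant non-negatively curved metric.
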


\begin{proof}
By work of Parker \cite{Par}, there are no smooth actions of rank $k = 3$ on a closed, smooth, simply-connected $4$-manifold $M$, giving us the desired bound on the rank.

The Main Theorem \ref{main} provides the result for $k=1$, so that $M$ is diffeomorphic to  one of $S^4$, $\ccc P^2$, $S^2\times S^2$ and $\ccc P^2\# \pm \ccc P^2$. When $k=2$, clearly $M$ must be diffeomorphic to a manifold on this same list. However, it is known that every smooth $T^2$ action on these manifolds admits an invariant metric of non-negative curvature, by work of Orlik and Raymond \cite{OR1} for $S^4$ and $\ccc P^2$ and Galaz-Garc\'ia and Kerin \cite{GGK} for $S^2\times S^2$ and $\ccc P^2\# \pm \ccc P^2$, and the actions are classified as stated.
\end{proof}

\begin{org}
In Section \ref{s:prelim} we include notation and background  needed for the rest of the paper.   In Section \ref{s2'} we prove some results on triangles in almost non-negatively curved Alexandrov spaces and bound the number of boundary components of an almost non-negatively curved Alexandrov space. 
In Section \ref{s3} we prove Proposition \ref{p:isolated}  
and in Section \ref{s4} we prove Proposition \ref{FPHcase}: these two results combine to prove the Main Theorem \ref{main}.
\end{org}

\begin{ack} J.~Harvey thanks the staff and faculty of the Department of Mathematics at Wichita State University for their hospitality while a portion of this work was completed. Part of this research was carried out while he enjoyed the support of a Daphne Jackson Fellowship sponsored by the U.K.\ Engineering and Physical Sciences Research Council and Swansea University. C.~Searle  gratefully acknowledges support from grants from the U.S.\ National Science Foundation (\#DMS-1611780) and  from the Simons Foundation (\#355508, C.~Searle). The authors are grateful to the referee for their careful reading and suggested improvements.
\end{ack}

\section{Preliminaries}\label{s:prelim}

In this section we include   basic results and facts about transformation groups and Alexandrov spaces as well as notation that will be used throughout the paper.

\subsection{Transformation Groups}

Let $G$ be a compact Lie group acting by diffeomorphisms on a smooth manifold $M$. Recall that the {\em isotropy group} of a point $p\in M$ is the stabilizer of $p$ in $G$. We denote it by $G_p$ and note that it acts on $T_p M$. In this paper we will only consider the restricted action of $G_p$ on the unit sphere in $\nu_p M$, the space normal to the orbit through $p$.  
The action is called \emph{effective} if $\cap_{p \in M} G_p = \lbrace e \rbrace$.

In the case where $G=S^1$ there are three basic orbit types.  An orbit will be {\em principal}, {\em exceptional} or a {\em fixed point} if  its  isotropy subgroup is, respectively, trivial, finite of order $\geq 2$ or the full group $S^1$.

We will let $F$ denote the set of fixed points of the circle action in $M$ and $E$ the set of exceptional orbits. We let $\pi:M\rightarrow M/G=M^*$ be the orbit map and denote the images of $F$ and $E$ in $M^*$ by $F^*$ and $E^*$, respectively.

 Corollary IV.4.7 of Bredon \cite{Br} characterizes the orbit space, $M^*$, under the hypotheses of the Main Theorem \ref{main}.
\begin{lemma}\cite{Br}
\label{sconn} Let $G$ act on $M$ by cohomogeneity $3$, with $H_1(M; \zzz_2)=0$  and all orbits connected.  Then $M^*$ is a  $3$-manifold with or without boundary.
\end{lemma}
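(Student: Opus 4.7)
The plan is to reduce the global claim to a purely local one via the slice theorem, then use the cohomological hypotheses to constrain the local picture.

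First, I would invoke the slice theorem: for each $p \in M$ there is a $G$-invariant tubular neighborhood of the orbit $G\cdot p$ equivariantly diffeomorphic to $G \times_{G_p} S_p$, where $S_p$ is a $G_p$-invariant disk orthogonal to the orbit. Since the cohomogeneity is $3$, we have $\dim S_p = 3$, and a neighborhood of the orbit point $[p]\in M^*$ is homeomorphic to $S_p/G_p$, i.e.\ an orthogonal quotient of an open subset of $\rrr^3$ by the image of the slice representation $G_p \to O(3)$.

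Next, I would classify which such quotients are $3$-manifolds with or without boundary. A standard finite-subgroup-of-$O(3)$ analysis shows that $\rrr^3/H$ is a topological $3$-manifold precisely when $H\subset O(3)$ is either trivial (yielding an interior point of $M^*$) or generated by a single reflection across a plane (yielding a boundary point with half-space neighborhood). Any other effective image—cyclic rotations of order $\geq 2$, dihedral groups, or the rotation groups of the platonic solids—produces cone-type singularities which are not locally Euclidean.

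The main obstacle is the third step: using the hypotheses to exclude every other effective slice representation. Here I would follow Bredon's Chapter IV machinery. The assumption that every orbit is connected forces $G_p$ to surject onto $\pi_0(G)$, so the only component structure of $G_p$ that matters is its image in $O(3)$. To rule out rotational isotropy, I would use the $\zzz_2$-cohomology hypothesis via a Smith-type/transfer argument: if some slice representation factored through a nontrivial cyclic rotation, the set of points whose isotropy contains a fixed involution would form a closed $\zzz_2$-cohomology submanifold of even codimension whose presence, combined with the transfer sequence for the principal-orbit cover, would obstruct the vanishing of $H_1(M;\zzz_2)$. Equivalently, one can use that, after removing the singular stratum, the principal part $M_{\mathrm{reg}}\to M^*_{\mathrm{reg}}$ is a principal $G/H_{\mathrm{princ}}$-bundle, and the connectedness-plus-$H_1$ vanishing constraints force the local holonomy around any interior singular stratum to be trivial, leaving only reflection-type boundary behavior.

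Having ruled out every non-reflection, non-trivial slice action, the local models at every point of $M^*$ are either open balls in $\rrr^3$ or half-space neighborhoods, and patching these yields that $M^*$ is a $3$-manifold with (possibly empty) boundary. I expect the third step to be the genuine content; steps one and two are routine applications of the slice theorem and the classification of finite subgroups of $O(3)$.
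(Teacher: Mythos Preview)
The paper does not give its own proof of this lemma; it is quoted directly as Corollary~IV.4.7 of Bredon, so there is no in-paper argument to compare against. Your outline has the right overall architecture --- slice theorem, then local analysis of the quotient, then use the hypotheses to exclude bad local models --- but two of the three steps are wrong in ways that matter.

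First, the slice $S_p$ is not always $3$-dimensional. Its dimension is $\dim M - \dim(G\cdot p)$, which equals $3$ only at orbits of principal dimension. At an orbit of strictly smaller dimension --- for instance at an isolated fixed point of the $S^1$ action on $M^4$ relevant to this paper --- the slice has dimension $4$ and $G_p=S^1$ is positive-dimensional. So the local problem does not reduce to finite subgroups of $O(3)$ acting on $\rrr^3$; one must also analyze quotients such as $\rrr^4/S^1$, and it is exactly these that produce the spaces of directions $X_{s,t}$ used throughout the paper.

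Second, your classification in step~2 is incorrect even in the finite case. For any finite $H\subset SO(3)$ the quotient $S^2/H$ is homeomorphic to $S^2$ (by Riemann--Hurwitz, or by direct inspection of the cyclic, dihedral and polyhedral rotation groups), so $\rrr^3/H$ is homeomorphic to $\rrr^3$. Cyclic rotations, dihedral rotation groups, and the platonic rotation groups therefore all give manifold quotients, contrary to what you assert. The finite subgroups that fail to give a manifold-with-boundary quotient are those containing an orientation-reversing element with no fixed hyperplane, the prototype being the antipodal involution, whose quotient is the open cone on $\rrr P^2$. This is where the hypothesis $H_1(M;\zzz_2)=0$ actually enters in Bredon's argument: via Smith theory it excludes precisely this kind of $\zzz_2$ isotropy, not the rotations you attempt to rule out in step~3. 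Consequently step~3 is aimed at the wrong target, and would not establish the lemma even if the transfer/holonomy sketch there were made precise.
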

We can further characterize the orbit space using Proposition 3.1 of 
Fintushel \cite{F1},  which we recall here for the reader's convenience (see Figure \ref{f:fint}). 
\begin{proposition}\cite{F1}
\label{p:Fint} Let $S^1$ act smoothly on $M^4$, a closed, simply-connected $4$-manifold. 
Then the following hold.
\begin{enumerate}
\item In the case where $\d M^*\neq \emptyset$, 
$\d M^*\subset F^*$.
\item The set $F^*\setminus \d M^*$ is isolated.
\item The  set $E^*$ is a union of open arcs in $M^*$ and these arcs have closures with distinct endpoints in $F^*\setminus \d M^*$.
\end{enumerate}
\end{proposition}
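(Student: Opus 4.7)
My plan is to derive all three assertions from the slice theorem for smooth circle actions, together with the hypothesis that $M$ is closed and simply-connected. First I would classify the slice representations by orbit type. At a principal orbit the slice is $\mathbb{R}^3$ with trivial action. At an exceptional orbit with isotropy $\mathbb{Z}_m$, the generator acts on the normal $\mathbb{R}^3$ as an orientation-preserving orthogonal transformation of order $m$ (being the restriction of the derivative of the $S^1$-action), hence as a rotation about an axis. At an isolated fixed point the slice is $\mathbb{R}^4 \cong V_a \oplus V_b$ with $a,b \neq 0$ and $\gcd(|a|,|b|)=1$ by effectiveness. At a fixed point on a $2$-dimensional component $F_0 \subset F$ the slice is $\mathbb{R}^2 \oplus V_a$; if $|a|>1$ then $\mathbb{Z}_{|a|} \subset S^1$ would act trivially on the slice, hence on a $4$-dimensional open subset of $M$, and therefore on all of $M$ by connectedness, contradicting effectiveness — so $|a|=1$.

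Next I would read off the local orbit-space structure. The slice quotients at principal orbits, exceptional orbits, and isolated fixed points are each homeomorphic to $\mathbb{R}^3$: trivially in the first case; in the second, using the standard fact that the rotation quotient $\mathbb{R}^3/\mathbb{Z}_m$ is topologically $\mathbb{R}^3$, with the rotation axis descending to an arc; in the third, as the open cone on a weighted $\mathbb{C}P^1 \cong S^2$. Near a point of a $2$-dimensional fixed component the quotient is $\mathbb{R}^2 \times [0,\infty)$, contributing boundary with no exceptional orbits nearby. Assertions (1) and (2) follow at once: $\partial M^* \subset F^*$ because boundary arises only from $2$-dimensional fixed components, and each isolated fixed point maps to an isolated point of the interior of $F^*$.

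For (3), the same local analysis shows that near any exceptional orbit $E^*$ is a single arc with locally constant isotropy $\mathbb{Z}_m$, so each connected component of $E^*$ is an open $1$-manifold of constant isotropy. Its closure in $M^*$ can only acquire limit points of strictly larger isotropy, necessarily $S^1$-fixed points; and because $2$-dimensional fixed components have no nearby exceptional orbits, these endpoints lie in $F^* \setminus \partial M^*$. The preimage in $M$ of the closure of such a component is a closed surface $\Sigma$ supporting an effective $S^1/\mathbb{Z}_m \cong S^1$-action whose fixed points are exactly the lifts of the endpoints; by the classification of circle actions on closed surfaces, $\Sigma$ is either a $2$-sphere (two fixed points, giving two distinct endpoints of the arc) or a torus (no fixed points, giving a circle component of $E^*$). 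In particular, the Euler-characteristic argument already rules out an arc whose two endpoints are identified.

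The main obstacle will be ruling out the torus case. Here I would exploit that $M$ simply-connected together with the existence of an $S^1$-fixed point forces $M^*$ to be simply-connected — any loop based at a fixed point lifts to a loop in $M$ that is null-homotopic there, and this null-homotopy projects down. A torus component of $\Sigma$, giving a circle $\gamma \subset E^*$, ought then to conflict with $\pi_1(M)=0$ via a Seifert-fibered analysis of a tubular neighborhood of $\tilde\gamma = T^2$ in $M$: the meridian of $\tilde\gamma$, combined with the $S^1$-orbit direction, generates a subgroup of $\pi_1$ whose order is forced to be a multiple of $m$ by the local slice model, which cannot be killed globally when $m \geq 2$. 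Making this incompatibility precise is the delicate part of the argument; the other assertions and the remaining parts of (3) are essentially formal consequences of the slice theorem.
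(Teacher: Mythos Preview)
The paper does not prove this proposition; it is quoted directly from Fintushel \cite{F1} (his Proposition~3.1) and used as a black box throughout. There is consequently no in-paper argument against which to compare your proposal.

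For what it is worth, your outline is essentially a reconstruction of Fintushel's own proof. The slice-theorem classification of the local models, the identification of $\partial M^*$ with the image of the $2$-dimensional fixed components, the isolation of interior fixed points, and the description of each component of $E^*$ as an open arc of constant finite isotropy whose closure lifts to a closed orientable surface carrying an effective circle action---hence $S^2$ or $T^2$---are all as in \cite{F1}. Your argument that an arc cannot have coinciding endpoints is correct, and in fact the local picture at an isolated fixed point already gives this: the $\zzz_m$-fixed locus in the slice $V_a\oplus V_b$ is a single $2$-plane, whose image in $M^*$ is a half-line, so at most one end of a $\zzz_m$-arc can terminate there.

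You also correctly isolate the one genuinely delicate step, ruling out a toral component (equivalently, a circle in $E^*$); this is the only place simple connectivity of $M$ is used. Your Seifert-fibered sketch is the right mechanism. One clean way to finish: since $M^*$ is a simply-connected $3$-manifold, the putative circle $\gamma\subset E^*$ bounds an embedded disk $D$ meeting the singular set only in $\gamma$; the restriction of $\pi$ over $D$ then exhibits $\pi^{-1}(D)$ as a Seifert piece whose boundary is the torus $\pi^{-1}(\gamma)$, and a van~Kampen computation comparing this with a tubular neighbourhood of $\pi^{-1}(\gamma)$ in $M$ forces nontrivial torsion in $H_1(M;\zzz)$, contradicting $\pi_1(M)=0$. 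This is precisely the obstruction you describe.
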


\begin{figure}
	\begin{tikzpicture}
	\node[anchor=south west,inner sep=0] (image) at (0,0) {\includegraphics[width=0.6\textwidth]{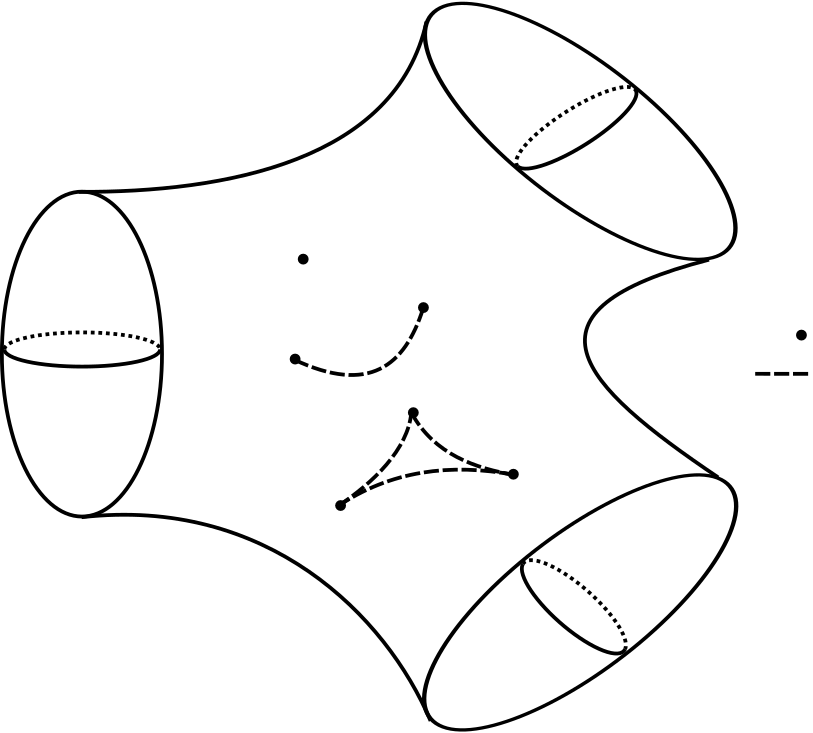}};
	\begin{scope}[
	x={(image.south east)},
	y={(image.north west)}
	]
	\node  at (0.9,0.9) {$\d \mstar \subset \fstar$};
	\node  at (-0.08,0.35) {$\d \mstar \subset \fstar$};
	\node  at (0.79,0.25) {$\d \mstar \subset \fstar$};
	\node  at (0.88,0.54) {$\fstar \setminus \d \mstar$};
	\node  at (0.88,0.49) {$\estar$};
	\end{scope}
	\end{tikzpicture}
	\caption{The orbit space $\mstar = M^4/S^1$ as described in Proposition \ref{p:Fint}.}
	\label{f:fint}
\end{figure}

\subsection{Alexandrov Spaces}

Alexandrov geometry is a natural tool to use in studying isometric group actions in the context of bounded curvature because simple examples of Alexandrov spaces with $\curv \geq k$ include
\begin{itemize}
	\item Riemannian manifolds with $\sec \geq  k$ and
	\item quotients of Riemannian manifolds with $\sec \geq  k$ by closed groups of isometries.
\end{itemize}
The reader who is not familiar with Alexandrov spaces should find it sufficient for the purposes of this paper to consider only spaces of these two types.

A finite-dimensional \emph{Alexandrov space}  is a locally complete, locally compact  length space, with a lower curvature bound in the triangle comparison sense. In dimensions $n\geq 1$, the space is assumed to be connected, whereas in dimension $0$,  we allow a two-point space. Additionally, we will assume throughout this paper that the space is compact. 

There are a number of introductions to Alexandrov spaces to which the reader may refer for basic information (see, for example, Burago, Burago and Ivanov~\cite{BBI}, Burago, Gromov and Perelman~\cite{BGP}, Plaut~\cite{Pl}, and Shiohama~\cite{Sh}). 

A \emph{geodesic} in an Alexandrov space is a shortest path between two points. Unlike in Riemannian geometry,
geodesics are always globally length-minimizing.

The \emph{space of directions} of an $n$-dimensional Alexandrov space $X$ at a point $p$ is,
by definition, the completion of the 
space of geodesic directions at $p$ and is denoted by $\Sigma_p X$ or, where there is no confusion, $\Sigma_p$.  
The space of directions is a compact Alexandrov space of dimension $n-1$ with $\curv \geq 1$. In the case $X=M/G$, that is, where $X$ is the orbit space of an isometric action by a group $G$ on a Riemannian manifold $M$, $\Sigma_p$ is isometric to the quotient of the unit sphere in $\nu_q M$ by the isotropy subgroup $G_q$ for any $q \in \pi^{-1}(p)$.

Alexandrov spaces can have many types of singularities and among them are the \emph{extremal sets}, which are well reviewed in \cite{PetSC}. We shall not make this notion precise here; it is sufficient for the present work to mention that the closure of a stratum of an orbit space given by points representing orbits of the same type is an example of an extremal subset. Connected components of the boundary of an Alexandrov space are also examples of extremal subsets. At an isolated extremal point $p$ we have $\diam (\Sigma_p) \leq \pi/2$.

The class of Alexandrov spaces is closed under certain types of operations. Two of these will be useful in what follows. The first such operation is that of gluing along boundary faces. We summarize this result in the following theorem due to work of Perelman \cite{P1}, Petrunin \cite{PetQG}, and W\"orner \cite{Wo}. 
\begin{theorem}
	Let $X$ and $Y$ be two  Alexandrov spaces of the same dimension, both with $\curv \geq k$. Suppose that $A \subset X$ and $B \subset Y$ are connected components of the boundaries of $X$ and $Y$ respectively, or, more generally, are codimension-one extremal subsets. If $f \colon A \to B$ is an isometry with respect to the intrinsic metrics on $A$ and $B$, then $X \cup_f Y$ with the induced length  metric is also an Alexandrov space of $\curv \geq k$.
\end{theorem}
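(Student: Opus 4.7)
The plan is to verify the curvature bound $\curv\geq k$ on $Z=X\cup_f Y$ by localizing to small neighborhoods of the gluing locus $A\cong B$, since away from this locus the glued space is locally isometric to an open subset of $X$ or of $Y$ and thus inherits the bound automatically. The argument proceeds by induction on dimension $n$, with the inductive hypothesis applied to spaces of directions.

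First, I would confirm that $Z$ carries a well-defined complete length metric. For $x\in X$ and $y\in Y$ the induced distance is $d(x,y)=\inf_{a\in A}(d_X(x,a)+d_Y(f(a),y))$; the infimum is attained by compactness of $A$, and a minimizing geodesic from $x$ to $y$ crosses $A$ at exactly one point, decomposing into minimizing geodesics in $X$ and in $Y$ on either side.

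Next, I would analyze the space of directions $\Sigma_p Z$ at a point $p\in A$. The tangent-cone structure theory of codimension-one extremal subsets due to Petrunin and W\"orner gives a codimension-one extremal subset $\Sigma_p A\subset\Sigma_p X$, and similarly $\Sigma_p B\subset\Sigma_p Y$, with the differential of $f$ inducing an intrinsic isometry between them. Then $\Sigma_p Z$ is naturally the glued space $\Sigma_p X\cup_{df}\Sigma_p Y$, and the inductive hypothesis applied in dimension $n-1$ with curvature bound $1$ yields $\curv\geq 1$ there. With this structural information in hand, the four-point comparison inequality in $Z$ near $A$ follows from a first variation argument at the crossing of a minimizing geodesic with $A$: using the limiting directions in $\Sigma_p X$ and $\Sigma_p Y$ and the comparison bound in $\Sigma_p Z$, Toponogov-type reasoning on sub-triangles reduces the mixed case to comparison inequalities already available in $X$ and $Y$.

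The main obstacle is the general codimension-one extremal subset case. Boundary gluing can be related relatively cleanly to Perelman's doubling theorem, but a codimension-one extremal subset may have singular local geometry without any global boundary description; the technically demanding step is establishing at each point of such a subset a sufficiently boundary-like two-sided normal structure, so that the first variation formula and the inductive description of $\Sigma_p Z$ remain valid. This is the deep content of the work of Petrunin and W\"orner, built on the theory of semiconcave functions, gradient flows, and quasi-geodesics.
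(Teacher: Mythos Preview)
The paper does not prove this statement; it records it as a known result, attributing the boundary-gluing case to Perelman and Petrunin and the codimension-one extremal generalization to W\"orner, and then uses it as a black box. So there is no ``paper's own proof'' to compare against.

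Your outline is broadly in the spirit of the cited literature: Petrunin's argument for gluing along boundaries does reduce to a local analysis near the gluing locus and ultimately rests on Perelman's doubling theorem, and W\"orner's extension does hinge on establishing the right two-sided normal structure for codimension-one extremal subsets. Two caveats are worth flagging. First, the displayed distance formula $d(x,y)=\inf_{a\in A}\bigl(d_X(x,a)+d_Y(f(a),y)\bigr)$ and the claim that a minimizer crosses $A$ exactly once are not automatic; a path in $Z$ can cross the gluing locus several times, and showing that a single crossing suffices already uses nontrivial convexity properties of $A$ and $B$ inside $X$ and $Y$. Second, your inductive step asserts that $\Sigma_p Z=\Sigma_p X\cup_{df}\Sigma_p Y$ and then applies the inductive hypothesis, but identifying the space of directions of the glued space with the glued spaces of directions is itself one of the delicate points, not a formality. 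As a high-level roadmap your sketch is reasonable, but as a proof it is still several substantial lemmas away from complete; since the paper only cites the result, that is an acceptable level of detail for present purposes.
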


A second operation which preserves a lower curvature bound is that of taking the double branched cover over an extremal knot in $S^3$. The following lemma is a slight generalization of a result of Grove and Wilking \cite[Lemma 5.2]{GW}, which was originally stated only in the case $k=0$.  The proof by the present authors of a generalization of that result in a different direction \cite[Theorem A]{HS} shows how the curvature bound can be modified.

\begin{lemma}\cite{HS}	\label{ram} Let $X$ be an Alexandrov space of $\curv \geq k$ which is homeomorphic to $S^3$. Let $c$ be a simple closed curve in $X$ which is an extremal subset. Then the double branched cover of $X$ over $c$, $X_2(c)$, is also an Alexandrov space of $\curv \geq k$.
\end{lemma}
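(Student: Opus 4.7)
The plan is to follow the strategy of Grove--Wilking \cite[Lemma 5.2]{GW} for $k = 0$, modifying the comparison geometry as in the authors' earlier work \cite{HS} to handle an arbitrary lower bound $k$. As a first step, I would construct $X_2(c)$ topologically: since $X$ is homeomorphic to $S^3$ and $c$ is a simple closed curve, $\pi_1(X \setminus c) \cong \mathbb{Z}$, so there is a unique connected double cover of $X \setminus c$, and $X_2(c)$ is obtained by filling in a copy of $c$ as the branch locus. Equip $X_2(c)$ with the length metric for which the branched covering map $\pi \colon X_2(c) \to X$ is a local isometry on the complement of $\pi^{-1}(c)$.

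Next, I would verify the condition $\curv \geq k$ locally. Away from the preimage of $c$ the map $\pi$ is a local isometry, so the bound descends immediately from $X$. The crucial case is at a point $\tilde{p}$ lying over some $p \in c$. Because $c$ is a one-dimensional extremal subset of the three-dimensional Alexandrov space $X$, the directions tangent to $c$ form a finite extremal subset (two points) inside the two-dimensional $\curv \geq 1$ space $\Sigma_p X$. The space of directions $\Sigma_{\tilde{p}} X_2(c)$ is then the double branched cover of $\Sigma_p X$ over these two extremal points, and a direct two-dimensional argument, as in Grove--Wilking, shows it is again a $\curv \geq 1$ Alexandrov space. This settles the infinitesimal picture at every point.

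To promote the infinitesimal condition to the global Toponogov inequality required for Alexandrov spaces, the idea is to consider small triangles in $X_2(c)$ and lift or reflect them into $X$ along the branched cover, using the extremal property of $c$ to control angles when a side crosses the branch locus. This reflection step is the main obstacle in generalizing from $k = 0$ to general $k$: the argument of Grove--Wilking relies on the affine structure of the Euclidean plane to compare the lifted or reflected configuration against a flat comparison triangle, and this is unavailable when $k \neq 0$. Here I would import the comparison machinery of \cite[Theorem A]{HS}, which carries out the analogous manipulation against the two-dimensional model space $M_k^2$ rather than against $\mathbb{R}^2$, showing that hinge angles behave correctly under the reflection and thus that the lower bound $\curv \geq k$ is preserved under the branched-cover construction. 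A standard globalization theorem for Alexandrov spaces then promotes the local comparison to the asserted global bound on $X_2(c)$.
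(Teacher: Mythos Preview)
The paper does not supply its own proof of this lemma; it is stated with a citation to \cite{HS}, together with the remark that the Grove--Wilking argument for $k=0$ in \cite[Lemma~5.2]{GW} can be adapted to general $k$ using the techniques of \cite[Theorem~A]{HS}. Your proposal is entirely in line with this: you sketch the Grove--Wilking construction (build the branched cover, check the space of directions over points of $c$, then run a reflection/lifting argument for small triangles) and correctly flag the comparison step where the \cite{HS} machinery replaces Euclidean comparison with comparison in the model space $M_k^2$.

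One small correction is in order. Your claim that $\pi_1(X \setminus c) \cong \mathbb{Z}$ is false in general: for a knotted $c$ this is the knot group, which need not be abelian. What is true, and sufficient for the construction, is that $H_1(X \setminus c; \mathbb{Z}) \cong \mathbb{Z}$ (by Alexander duality or a Mayer--Vietoris argument), so there is a unique surjection $\pi_1(X \setminus c) \to \mathbb{Z}/2\mathbb{Z}$ and hence a unique connected double cover of $X \setminus c$. This slip does not affect the remainder of your outline.
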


We now recall the definition of the $q$-extent of a metric space, which is useful in estimating the number of isolated fixed points of an isometric group action in the presence of a lower curvature bound. The $q$-extent is defined to be the maximal average distance between $q$ points, not necessarily distinct, in a metric space, and a {\em $q$-extender} is any set of $q$ points that achieves the $q$-extent. That is,  for any metric space $(X, d)$ and positive integer $q \geq 2$, we define the $q$-extent of $X$ to be
$$\xt_q(X) =  \binom q2^{-1}\sup_{x_1, \ldots, x_q \in X} \sum_{1\leq i,\, j \leq q} d(x_i,x_j).$$

Finally, we recall the following useful lemma from  \cite{SY}.
For two relatively prime integers $s,t$, denote by $X_{s, t}$  the orbit space of $S^3$ by the isometric circle action 
${\rm e}^{{\rm i}\theta} \cdot (z_2, z_2)= ({\rm e}^{{\rm i}s\theta}z_1, {\rm e}^{{\rm i}t\theta}z_2)$.

\begin{lemma}\label{xtlemma}\cite{SY}  The bounds
$$\xt_4(X_{s, t})\leq\pi/3 \textrm{ and } \xt_5(X_{s, t})\leq 3\pi/10$$
hold.
Moreover, given 4 distinct points in $X_{s, t}$ with $(|s|,|t|) \neq (1,1)$, 
$$\sum_{1\leq i<j\leq 4} \dist(x_i, x_j)<2\pi.$$
In particular, the 4-extent of 4 distinct points in $X_{s, t}$ is strictly less than $\pi/3$. 
\end{lemma}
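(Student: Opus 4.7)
My approach exploits that $X_{s,t}$ is a $2$-dimensional Alexandrov space of $\curv\geq 1$, homeomorphic to $S^2$, of diameter exactly $\pi/2$, and rotationally symmetric: the quotient metric is a warped product $d\theta^2+f_{s,t}(\theta)^2\,d\phi^2$ with $\theta\in[0,\pi/2]$ and $\phi\in[0,1)$, carrying at most two conical singularities at the poles $\theta=0,\pi/2$. The strategy is to compare $X_{s,t}$ directly to $X_{1,1}=S^2(1/2)$ via the shared $(\theta,\phi)$ parametrization.

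In the Hopf case $(|s|,|t|)=(1,1)$ the space is isometric to $S^2(1/2)$, and both bounds are classical. The $4$-extent is realized by an inscribed regular tetrahedron with pairwise distance $\arccos(-1/3)/2<\pi/3$. The $5$-extent is realized by a triangular bipyramid (two antipodal poles plus three equally-spaced equatorial points); summing $\pi+6(\pi/2)+3(2\pi/3)=6\pi$ over the $\binom{5}{2}$ pairs on the unit sphere and rescaling by $1/2$ yields exactly $3\pi/10$.

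For the non-Hopf case, the Riemannian submersion formula gives the circle length $L_{s,t}(\theta)=\pi\sin(2\theta)/\sqrt{s^2\cos^2\theta+t^2\sin^2\theta}$, which is pointwise $\leq L_{1,1}(\theta)=\pi\sin(2\theta)$, with strict inequality on $(0,\pi/2)$ since $s^2\cos^2\theta+t^2\sin^2\theta>1$ there whenever $(|s|,|t|)\neq(1,1)$. Identifying both metrics on the same topological $S^2$ via $(\theta,\phi)$, the identity is a $1$-Lipschitz surjection $S^2(1/2)\to X_{s,t}$, which transfers both extent bounds. The strict inequality $\sum_{1\leq i<j\leq 4}\dist(x_i,x_j)<2\pi$ for four distinct points then follows by tracing equality: it would force four preimages in $S^2(1/2)$ to realize $\xt_4(S^2(1/2))$ with every pairwise distance unchanged under the squash, requiring all four to lie on the polar axis (the only locus where the identity is isometric), which is impossible for four distinct points.

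The main obstacle I anticipate is verifying this strict inequality in the non-Hopf case, particularly ruling out the degenerate configurations along the polar axis where the squash map fails to strictly decrease distances. A robust fallback is to apply Toponogov comparison with model $S^2_1$ triangle-by-triangle on the four-point set and trace the equality case through the spherical estimates directly, bypassing the squash map altogether.
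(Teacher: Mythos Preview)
The paper does not prove this lemma; it is quoted from Searle--Yang \cite{SY} without argument, so there is nothing in the paper to compare against and your proposal has to stand on its own.

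The squash-map idea is sound: the pointwise inequality $L_{s,t}(\theta)\le L_{1,1}(\theta)$ does make the identity $S^2(1/2)\to X_{s,t}$ a $1$-Lipschitz surjection, and this transfers any extent bound from $S^2(1/2)$ to every $X_{s,t}$. The genuine error is in your treatment of the base case $X_{1,1}=S^2(1/2)$. The regular inscribed tetrahedron does \emph{not} realize $\xt_4(S^2(1/2))$: it gives average pairwise distance $\tfrac12\arccos(-1/3)\approx 0.955$, whereas any two antipodal pairs $\{a,-a,b,-b\}$ (for instance four equally spaced points on a great circle) give average exactly $\pi/3\approx 1.047$. Thus $\xt_4(S^2(1/2))=\pi/3$ on the nose, and establishing this upper bound is the actual content of the inequality, not a consequence of the tetrahedron. (One correct route: every geodesic triangle in $S^2(1/2)$ has perimeter at most $\pi$; summing over the four triangles determined by four points, with each edge counted twice, gives $\sum_{i<j}d(x_i,x_j)\le 2\pi$.) For $\xt_5$ you only exhibit one configuration achieving $3\pi/10$ and do not argue optimality; the same perimeter trick yields only $\sum d\le 10\pi/3$, so a sharper argument is needed there.

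Your strict-inequality step inherits the same problem. The $4$-extenders on $S^2(1/2)$ are exactly the sets $\{a,-a,b,-b\}$ (this follows from the equality analysis in the perimeter argument above), not configurations confined to a ``polar axis.'' A corrected equality-tracing would run: preservation of $d(a,-a)=\pi/2$ under the squash forces $\Phi(a)$ and $\Phi(-a)$ to be the two poles of $X_{s,t}$, since for $(|s|,|t|)\neq(1,1)$ the diameter $\pi/2$ is realized \emph{only} at the poles; the same applied to $b,-b$ then contradicts distinctness. But each link here---the classification of $4$-extenders on $S^2(1/2)$ and the uniqueness of the diameter pair in $X_{s,t}$---is a lemma you have not supplied, and your stated reasoning (that distance preservation forces all four points onto the polar axis) is neither correct nor justified: the squash is isometric along every meridian, yet the actual $4$-extenders do not sit on a single meridian.
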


\section{Alexandrov spaces of almost non-negative curvature}\label{s2'}

Since a Riemannian manifold with a lower curvature bound is also an Alexandrov space, we simply state the definition of almost non-negative curvature 
for Alexandrov spaces.

\begin{definition}\label{d:annc} We say that a
	sequence of Alexandrov spaces
	$\{(X, \dist_{n})\}_{n=1}^{\infty}$ 
	is \emph{almost non-negatively curved} if there is
	a fixed  $D>0$ so that 
	\begin{equation*}
		\diam\left( X,\dist_{n }\right) \leq D \text{ and }
		\curv\left( X,\dist_{n }\right) \geq -\frac{1}{n^2}.
	\end{equation*}
	
	We will also say that the topological space $X$ admits almost non-negative curvature (in the Alexandrov sense) or, less formally, that $X$ is an Alexandrov space of almost non-negative curvature.

	We can 
	always rescale the metrics, $\dist_{n}$, on $X$ so that each $(X, \dist_{n})$ 
	has diameter 1 and we will always do so. Let $(X_{\infty}, \dist_{\infty})$ denote the  limit space $\lim_{n \to \infty } (X, \dist_n)$. Then $\diam (X_{\infty},  \dist_{\infty})=1$.
\end{definition}

\begin{remark} When we talk about almost non-negatively curved \emph{manifolds}, we specify the manifold up to diffeomorphism. In Definition \ref{d:annc} we only specify the space up to homeomorphism and this can create ambiguities. For example, the round sphere $S^5$ and the double-suspension of the Poincar\'{e} homology sphere are markedly different as Alexandrov spaces but they are homeomorphic as topological spaces. 
It would thus be of interest to find a category intermediate between that of topological spaces and Alexandrov spaces which plays a similar role in the subject to that of the category of smooth manifolds.  
\end{remark}

In non-negative curvature, every geodesic triangle has angle sum at least $\pi$. As might be expected, in almost non-negative curvature this can be shown to be {\em almost} true.

\begin{lemma}\label{l:defect}
	Let $X$ be an Alexandrov space such that $\curv(X) \geq -k^2$ and $\diam (X) \leq 1$. Then the defect of any triangle in $X$ is bounded above by a function $\mu(k)$ with $\mu(k) = O(k^2)$.
\end{lemma}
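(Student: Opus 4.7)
The plan is to reduce the estimate to a computation in the model space and then exploit the diameter bound to keep the resulting area small. Let $T$ be a geodesic triangle in $X$ with vertices $p,q,r$, and let $\widetilde{T}$ be the comparison triangle with the same side lengths in the model space $M^2_{-k^2}$ (the simply-connected surface of constant curvature $-k^2$). Since $X$ is an Alexandrov space with $\curv(X)\geq -k^2$, the Alexandrov angle comparison says that each angle of $T$ is at least the corresponding angle of $\widetilde{T}$, so
\[
\pi - \bigl(\alpha(p)+\alpha(q)+\alpha(r)\bigr) \;\leq\; \pi - \bigl(\widetilde{\alpha}(p)+\widetilde{\alpha}(q)+\widetilde{\alpha}(r)\bigr).
\]
Hence it suffices to bound the defect of $\widetilde{T}$ in the model space.

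In $M^2_{-k^2}$ the Gauss--Bonnet theorem gives the classical hyperbolic excess formula $\pi - \sum\widetilde{\alpha} = k^2 \cdot \mathrm{Area}(\widetilde{T})$, so it remains to bound $\mathrm{Area}(\widetilde{T})$ uniformly. The vertices of $\widetilde{T}$ are pairwise at distance at most $\diam(X)\leq 1$. Because the distance function from any point in a simply-connected space of non-positive curvature is convex, metric balls in $M^2_{-k^2}$ are convex; in particular the closed ball $\overline{B}(\widetilde{p},2)$ contains the three vertices and is convex, and so it contains the filled-in geodesic triangle $\widetilde{T}$. Therefore
\[
\mathrm{Area}(\widetilde{T}) \;\leq\; \mathrm{Area}\bigl(\overline{B}(\widetilde{p},2)\bigr) \;=\; \tfrac{2\pi}{k^{2}}\bigl(\cosh(2k)-1\bigr).
\]

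Combining the two steps,
\[
\text{defect}(T) \;\leq\; k^2\cdot \mathrm{Area}(\widetilde{T}) \;\leq\; 2\pi\bigl(\cosh(2k)-1\bigr),
\]
and a Taylor expansion gives $2\pi(\cosh(2k)-1)=4\pi k^{2}+O(k^{4})$. Setting $\mu(k):=2\pi(\cosh(2k)-1)$ yields the desired bound with $\mu(k)=O(k^2)$. The only conceptual step is the angle comparison, which is built into the definition of the Alexandrov curvature bound; the rest is a direct computation in the model space, so there is no substantial obstacle. If any subtlety arises, it is only the mild verification that the filled comparison triangle actually lies in $\overline{B}(\widetilde{p},2)$, which follows from the convexity of hyperbolic balls.
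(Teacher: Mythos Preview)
Your proof is correct and follows the same overall strategy as the paper: reduce the defect to that of the comparison triangle in the model space of constant curvature $-k^2$, apply Gauss--Bonnet to convert the defect into $k^2$ times the area, and then bound the area using the diameter constraint.

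The one genuine difference is in the area bound. The paper invokes Bezdek's isoperimetric inequality for hyperbolic polygons to argue that among triangles with perimeter at most~$3$ the equilateral one has maximal area, and then computes its defect via the hyperbolic law of cosines. You instead enclose the comparison triangle in a hyperbolic disk (using convexity of balls in $M^2_{-k^2}$) and bound by the disk's area $\tfrac{2\pi}{k^2}(\cosh(2k)-1)$. Your route is more elementary---it avoids any external isoperimetric result---at the cost of a looser implicit constant in the $O(k^2)$ term, which is irrelevant for the lemma's purpose. Incidentally, since all three vertices lie within distance~$1$ of $\widetilde{p}$ and balls are convex, radius~$1$ would already suffice in place of radius~$2$, sharpening your constant by a factor of roughly~$4$; but again this does not matter here.
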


\begin{proof}
	The defect of a triangle in a space $X$ with $\curv(X) \geq -k^2$ is bounded above by the defect of a triangle with the same side lengths in the hyperbolic plane of constant curvature $-k^2$,
	which is equal to the area of the triangle multiplied by $k^2$.
	It follows that, when $\diam (X) \leq 1$, an upper bound can be determined from the area of the largest triangle in the hyperbolic plane with side lengths at most $1$.  
	Bezdek \cite{Bezdek} showed that the area of a polygon in the hyperbolic plane with a given perimeter is maximized by a regular polygon.

	Computing the Taylor expansion of the hyperbolic law of cosines, we see that the angle in an equilateral triangle with side length $1$ is given by \[ \arccos \frac{\cosh^2 k - \cosh k}{\sinh^2 k} = \frac{\pi}{3} - O(k^2)<\pi/3, \] and the result follows.
\end{proof}

Perelman and Petrunin in \cite{PP2} showed that the triangle comparison condition can be framed in terms of the concavity properties of the distance function from any point $p$.

We say that a locally Lipschitz function $f:  \rrr \ra \rrr$ is \emph{$\lambda$-concave} if $\phi(t) = f(t) - \lambda t^2 / 2$ is a concave function. 
We can write $f'' \leq \lambda$, since this differential inequality holds in the barrier sense. 
As a consequence of the concavity of $\phi$, we have $\phi'_+ (t_0) \geq \phi'_- (t_1)$ for $t_0 < t_1$, where $\phi'_-$ and $\phi'_+$ are the left and right derivatives, respectively.
A function $f :  X \ra \rrr$ on a length space $X$ is \emph{$\lambda$-concave} if its restriction to every 
shortest path 
is $\lambda$-concave.

In an Alexandrov space with $\curv \geq -k^2$,
the function
$f_k=\rho_k \circ \dist(p,\cdot)$
with
$$ \rho_k(x) = \frac{1}{k^2}(\cosh(kx)-1) $$
is 
$(1+k^2f_k)$-concave. Note that in the hyperbolic plane of constant curvature $-k^2$, equality holds so that $f_k'' = 1+k^2f_k$.
Similar conditions hold for non-negative curvature bounds, but we omit them here.

Using this formulation we can now prove a lemma for a certain class of {\em thin triangles}; those with one short edge, such that the two endpoints of the edge are extremal points, as shown in Figure \ref{f:thin}. This lemma will be important to the proof of Proposition \ref{p:4bound}.

\begin{lemma}\label{thin} Let $X$ be an Alexandrov space with curvature bounded below by $-k^2$ and fix $d_{\min}>0$. Let $p_1, p_2, p_3\in X$ be three distinct points with 
$p_2, p_3$ extremal, $\dist(p_2, p_3) = \epsilon $ and 
$\dist(p_1, p_i)\geq d_{\min}$ for $i\in\{2, 3\}$. Then
$$\pi/2\geq \measuredangle p_1 p_2 p_3 \geq \pi/2- f(d_{\min},\epsilon, k),$$
where   
\begin{equation}\label{e:1}
 f(d_{\min}, \epsilon, k)
 = \epsilon \left(\frac{1}{d_{\min}} + O(k^2) \right) + O(\epsilon^3).
 \end{equation}
\end{lemma}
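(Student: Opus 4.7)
The plan is to derive the upper bound directly from extremality and the lower bound from the concavity of $f_k = \rho_k \circ \dist(p_1,\cdot)$ recalled just above the lemma.

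For the upper bound $\measuredangle p_1 p_2 p_3 \leq \pi/2$, I use that $p_2$ is an isolated extremal point, so $\diam(\Sigma_{p_2}) \leq \pi/2$ as recorded in Section~\ref{s:prelim}; the angle at $p_2$ is then at most the diameter of the space of directions there. The identical argument at $p_3$ supplies the companion bound $\beta := \measuredangle p_1 p_3 p_2 \leq \pi/2$, which is needed in the next step.

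For the lower bound, fix a unit-speed minimizing geodesic $\gamma \colon [0,\epsilon] \to X$ from $p_2$ to $p_3$ and set $\phi(t) = f_k(\gamma(t))$. Since $\phi'' \leq 1 + k^2 \phi$ in the barrier sense, the function $\phi(t) - Ct^2/2$ is concave on $[0,\epsilon]$, where $C = 1 + k^2\max_{[0,\epsilon]}\phi$. Applying the monotonicity of one-sided derivatives for concave functions to this adjusted function, once on $[0,\epsilon]$ and once on the reversed geodesic, yields
\[
\phi(\epsilon)-\phi(0) \leq \phi'_+(0)\epsilon + \tfrac12 C\epsilon^2
\qquad\text{and}\qquad
\phi(0)-\phi(\epsilon) \leq -\phi'_-(\epsilon)\epsilon + \tfrac12 C\epsilon^2.
\]
Summing these and applying the first variation formulae $\phi'_+(0) = -k^{-1}\sinh(k\,\dist(p_1,p_2))\cos\alpha$ and $\phi'_-(\epsilon) = k^{-1}\sinh(k\,\dist(p_1,p_3))\cos\beta$ produces the key inequality
\[
\sinh(k\,\dist(p_1,p_2))\cos\alpha + \sinh(k\,\dist(p_1,p_3))\cos\beta \leq kC\epsilon.
\]

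The endgame is then routine. Both cosines are non-negative by the extremality bounds, and both sinh-factors are at least $\sinh(kd_{\min})$, so $\cos\alpha \leq kC\epsilon/\sinh(kd_{\min})$. Expanding $\sinh(kd_{\min}) = kd_{\min}(1 + O(k^2))$ and $C = 1 + O(k^2)$ (absorbing the dependence on the ambient distances into the $O(k^2)$, consistent with the paper's normalization of diameter to $1$) yields $\cos\alpha \leq (\epsilon/d_{\min})(1 + O(k^2))$. Writing $\alpha = \pi/2 - \delta$ with $\delta \geq 0$ and inverting via $\delta = \arcsin(\cos\alpha) = \cos\alpha + O(\cos^3\alpha)$ produces exactly \eqref{e:1}. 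The main obstacle, and really the only non-routine step, is setting up the two concavity estimates with the correct signs of the one-sided derivatives of $\phi$ at the two endpoints of $\gamma$; once this is done, the rest is Taylor bookkeeping.
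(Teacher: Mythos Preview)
Your argument is correct and follows essentially the same route as the paper: both proofs use extremality for the upper bound, the $(1+k^2\max f_k)$-concavity of $f_k$ along the short side, the first variation formula at the two endpoints, and the sign $\cos\beta\geq 0$ from extremality of $p_3$ to isolate $\cos\alpha$. The only cosmetic difference is that the paper invokes the inequality $\psi'_-(\epsilon)\leq\psi'_+(0)$ for the concave adjustment $\psi=\phi-\tfrac{C}{2}t^2$ directly, whereas you obtain the equivalent bound $\phi'_-(\epsilon)-\phi'_+(0)\leq C\epsilon$ by summing the two tangent-line inequalities; the Taylor bookkeeping at the end is then identical.
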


\begin{figure}
	\begin{tikzpicture}
	\node[anchor=south west,inner sep=0] (image) at (0,0) {\includegraphics[width=0.7\textwidth]{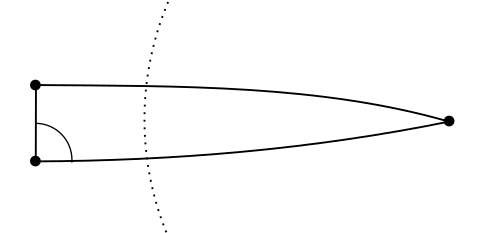}};
	\begin{scope}[
	x={(image.south east)},
	y={(image.north west)}
	]
	\node  at (0.96,0.485) {$p_1$};
	\draw [-{Latex[length=3mm,width=5mm]}, thick] (0.9,0.485) to (0.3,0.42);
	\node at (0.4,0.485) {$d_{\min}$};
	\node  at (0.07,0.70) {$p_3$};
	\node  at (0.07,0.24) {$p_2$};
	\node  at (0.02,0.47) {$\epsilon$};
	\node  at (0.11,0.38) {$\alpha$};
	\draw [-{Latex[length=3mm,width=3mm]}, thick] (0.02,0.51) to (0.02,0.635);
	\draw [-{Latex[length=3mm,width=3mm]}, thick] (0.02,0.43) to (0.02,0.3);
	\end{scope}
	\end{tikzpicture}
	\caption{A thin triangle. By Lemma \ref{thin}, if $p_2$ and $p_3$ are extremal then $\alpha \approx \pi/2$.}
	\label{f:thin}
\end{figure}

\begin{proof} Since $p_2$ is chosen to be extremal,   $\diam(\Sigma_{p_2}) \leq \pi/2$  and so $ \measuredangle p_1 p_2 p_3\leq \pi/2$.
	We proceed to demonstrate the lower bound.

	The function $f_k$ defined above satisfies $f_k'' \leq 1+k^2f_k$.
	Let $\gamma : [ 0, \epsilon] \to X$ be a geodesic from $p_2$ to $p_3$. Let $f = f_k \circ \gamma$ be the restriction of $f_k$ to the geodesic. Choose $R>0$ so that $f \leq R$ on $[ 0, \epsilon]$.
	It now follows that $f'' \leq 1+Rk^2$ on the geodesic, in other words $\phi(t)=f(t)-(1+Rk^2)\frac{t^2}{2}$ is concave.
	
	Let	$\alpha = \measuredangle p_1 p_2 p_3$. Then $$f'_+(0)= \frac{\sinh(k\dist(p_1, p_2))}{k} (-\cos \alpha)\leq - \frac{\sinh(kd_{\min})}{k} \cos \alpha.$$ Note that $\phi'_+(0) = f'_+(0)$.
	At the other end of $\gamma$, since $p_3$ is an extremal point, we have $\measuredangle p_1 p_3 p_2 \leq \frac{\pi}{2}$.  So $f'_-(\epsilon) \geq 0$ and hence $\phi'_-(\epsilon) \geq - (1+Rk^2) \epsilon$.
	Observe that the direction of the inequality is due to the fact that this is the angle at $p_3$ between a shortest path from $p_3$ to $p_1$ and a geodesic given by reversing $\gamma$.

	Now by concavity
	$\phi'_-(\epsilon) \leq \phi'_+(0)$
	so that
	$$-(1+Rk^2)\epsilon \leq -\frac{\sinh(kd_{\min})}{k} \cos \alpha,$$
	from which, using a Taylor series expansion for the last equality, we conclude that 
	$$  \cos \alpha \leq \frac{ k(1+Rk^2) }{\sinh(kd_{\min})}\epsilon = \epsilon \left(\frac{1}{d_{\min}} + O(k^2) \right).$$

	It follows that $$\measuredangle p_1 p_2 p_3 = \alpha \geq \frac{\pi}{2} - \epsilon \left(\frac{1}{d_{\min}} + O(k^2) \right) - O(\epsilon^3)$$ as required.
\end{proof}

We also need the following general proposition about $3$-dimensional Alexandrov spaces of almost non-negative curvature. The proof is very similar 
to that for non-negative curvature (see  \cite{K} and  \cite{SY}). 

\begin{proposition}\label{p:5bound}
	Let $\{(X, \dist_{n})\}_{n=1}^{\infty}$ be an almost non-negatively curved sequence of $3$-dimensional Alexandrov spaces. Then for sufficiently large $n$, $(X, \dist_n)$ can have at most five interior points with spaces of directions isometric to $S^3/S^1$.
\end{proposition}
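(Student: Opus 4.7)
The plan is to mirror the extent-based argument used in non-negative curvature (as in Kleiner \cite{K} and Searle--Yang \cite{SY}) while replacing Toponogov's angle-sum bound with the defect estimate from Lemma \ref{l:defect}. Suppose for contradiction that there is a sequence $n_k \to \infty$ such that for every $k$ the space $(X, \dist_{n_k})$ contains six interior points $p_1, \ldots, p_6$ (depending on $k$) whose spaces of directions are each isometric to some $X_{s_i, t_i} = S^3/S^1$.

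For each $k$ I would form the $\binom{6}{3} = 20$ geodesic triangles given by triples of these points, choosing a shortest path between each pair. Each triangle contributes three angles, for 60 angles in total, and each vertex $p_i$ is the apex of exactly $\binom{5}{2} = 10$ of them. Since $(X, \dist_{n_k})$ satisfies $\curv \geq -1/n_k^2$ with diameter $1$, Lemma \ref{l:defect} applies to each triangle and gives angle sum at least $\pi - \mu(1/n_k)$, where $\mu(1/n_k) = O(1/n_k^2)$. Summing over the twenty triangles produces the lower bound
\[
\sum_{\text{triangles}} (\text{angle sum}) \geq 20\pi - 20\mu(1/n_k).
\]

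For the upper bound I would exploit the space-of-directions identification. At each vertex $p_i$, the five initial directions of the chosen geodesics to the other $p_j$ form a set of five (not necessarily distinct) points in $\Sigma_{p_i} \cong X_{s_i,t_i}$, and for Alexandrov spaces the comparison angle equals the distance in the space of directions. Hence by Lemma \ref{xtlemma},
\[
\sum_{\substack{j<\,k\\ j,k\neq i}} \measuredangle p_j p_i p_k \;\leq\; \binom{5}{2}\xt_5(X_{s_i,t_i}) \;\leq\; 10\cdot\frac{3\pi}{10} \;=\; 3\pi.
\]
Summing over the six choices of $p_i$ accounts for every one of the 60 angles exactly once, so the total angle sum is at most $6\cdot 3\pi = 18\pi$. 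Combining the two bounds yields $20\pi - 20\mu(1/n_k) \leq 18\pi$, i.e.\ $\mu(1/n_k) \geq \pi/10$, which contradicts $\mu(1/n_k)\to 0$ for sufficiently large $k$.

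The main obstacle is quantitative: the argument only succeeds because the gap $2\pi$ between the extent ceiling $18\pi$ and the non-negatively curved floor $20\pi$ is strict, so one has room to absorb a triangle-defect loss of size $O(1/n_k^2)$. Once Lemma \ref{l:defect} is in place, there is no further delicacy, and the almost non-negatively curved conclusion follows immediately from the non-negatively curved counting.
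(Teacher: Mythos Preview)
Your argument is correct and matches the paper's proof essentially line for line: both count the $20$ triangles on six singular points, bound the total angle below by $20(\pi-\mu(1/n))$ via Lemma~\ref{l:defect}, bound it above by $6\cdot 3\pi=18\pi$ via the $5$-extent estimate of Lemma~\ref{xtlemma}, and derive a contradiction for large $n$. One small wording point: the angle at $p_i$ is by definition the distance in $\Sigma_{p_i}$ between the initial directions, not the ``comparison angle'', but this does not affect the logic.
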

\begin{proof} Let $S \subset X$ be the set of all such singular points and suppose that $\left|S\right|=6$. Write $S=\left\lbrace p_i \right\rbrace _{i=1} ^6$.

 Each of the 20 distinct triples which can be chosen from $S$ defines a triangle. Recalling that  $\curv(X,\dist_n) \geq -1/n^2$, it follows from Lemma \ref{l:defect} that each of these triangles has a total angle of at least $\pi - \mu(1/n)$.  So we may write \[\sum_{i, j, k} \measuredangle p_i p_j p_k \geq 20 (\pi - \mu(1/n)).\]
	
	On the other hand, 
	by considering the geometry of the space of directions $\Sigma_{p_j}$, we can obtain an upper bound for $\sum_{i,j,k} \measuredangle p_i p_j p_k$. Since by Lemma \ref{xtlemma} we have $\xt_5 (\Sigma_{p_j}) \leq 3 \pi / 10$ and there are ten angles based at $p_j$, we obtain \[\sum_{i, k} \measuredangle p_i p_j p_k \leq 3 \pi.\]
	
	Summing over all $j$, we obtain the inequality \[20 (\pi - \mu(1/n)) \leq 18 \pi,\] which, for large enough $n$, does not hold. The upper bound of five follows.
\end{proof}

It follows from the Soul Theorem for Alexandrov spaces \cite{P1} that a non-negatively curved Alexandrov space can have at most two boundary components. Here we show that the same result holds in almost non-negative curvature.

\begin{lemma}\label{boundary}
	An almost non-negatively curved Alexandrov space can have at most two boundary components.
\end{lemma}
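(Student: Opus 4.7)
The plan is to argue by contradiction, combining a Gromov--Hausdorff limit argument with Perelman's soul theorem for non-negatively curved Alexandrov spaces. Suppose that $X$ admits an almost non-negatively curved sequence $\{\dist_n\}$ (normalised so that $\diam(X,\dist_n)=1$) and that $X$ has at least three boundary components $B_1,B_2,B_3$.

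By Gromov's precompactness theorem for Alexandrov spaces with a uniform lower curvature bound and upper diameter bound, a subsequence of $(X,\dist_n)$ converges in the Gromov--Hausdorff topology to a compact Alexandrov space $(X_\infty,\dist_\infty)$ with $\curv(X_\infty)\geq 0$ and $\diam(X_\infty)=1$. By Perelman's soul theorem for non-negatively curved compact Alexandrov spaces, $X_\infty$ has at most two boundary components. The strategy is to show that three or more boundary components of $X$ must give rise, in the limit, to three or more codimension-one extremal subsets of $X_\infty$ contained in $\partial X_\infty$, contradicting the conclusion above.

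Each $B_i$ is a codimension-one extremal subset of every $(X,\dist_n)$. After passing to a further subsequence, each $B_i$ converges in Hausdorff distance to a closed subset $B_i^\infty\subset X_\infty$. By the stability of extremal subsets under Gromov--Hausdorff convergence, whenever $B_i^\infty\cap B_j^\infty=\emptyset$ for all $i\neq j$, each $B_i^\infty$ is a codimension-one extremal subset of $X_\infty$ and hence contained in $\partial X_\infty$, yielding the desired three distinct boundary components.

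The main obstacle is to rule out the collapsing case in which $\dist_n(B_i,B_j)\to 0$ for some pair. In that situation, fix $x_n\in B_i$ and $y_n\in B_j$ realising $\dist_n(x_n,y_n)=\epsilon_n\to 0$. Both points are extremal, so Lemma \ref{thin} applies to any triangle with a third vertex $z_n$ at distance bounded below from $x_n,y_n$: the angles $\measuredangle z_n x_n y_n$ and $\measuredangle z_n y_n x_n$ both approach $\pi/2$. Choosing $z_n$ in a non-collapsing component $B_k$ (which exists since all pairwise collapses would force $X_\infty$ to be a point, violating $\diam(X_\infty)=1$), and invoking the defect estimate of Lemma \ref{l:defect}, the angle $\measuredangle x_n z_n y_n$ contributes only $O(1/n^2)$ to the angle sum. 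Varying $z_n$ across $B_k$ imposes enough local rigidity at $x_n$ and $y_n$ that the limit structure near $B_i^\infty=B_j^\infty$ must support two codimension-one extremal sheets meeting along that set; combined with $B_k^\infty\subset\partial X_\infty$, one again obtains at least three codimension-one extremal subsets of $X_\infty$ contained in its boundary, giving the contradiction. Making this rigidity precise, particularly when several pairs of boundary components collapse simultaneously, is the main technical hurdle of the argument.
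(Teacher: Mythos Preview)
Your approach via Gromov--Hausdorff limits and Perelman's soul theorem is fundamentally different from the paper's, and it contains a genuine gap that you yourself flag: the collapsing case. When $\dist_n(B_i,B_j)\to 0$ you invoke Lemma~\ref{thin}, but that lemma requires $p_2,p_3$ to be \emph{isolated} extremal points with $\diam(\Sigma_{p_i})\leq\pi/2$. A generic point on a boundary component has space of directions a half-sphere of diameter $\pi$, so the lemma does not apply to your $x_n,y_n$. The subsequent claim that the limit must carry ``two codimension-one extremal sheets meeting'' is left vague, and you acknowledge that making this precise is an unresolved ``technical hurdle''. Even in the non-collapsing regime one must be careful: if $\dim X_\infty<\dim X$ the limits $B_i^\infty$ need not be codimension-one in $X_\infty$, so the passage from ``extremal limit set'' to ``boundary component of $X_\infty$'' is not automatic.

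The paper sidesteps all of this with a short counting-and-gluing argument. By Liu--Shen (via Wong), the number of boundary components of an $n$-dimensional Alexandrov space with $\curv\geq k$ and $\diam\leq D$ is bounded by some $C(n,D,k)$; since an almost non-negatively curved space admits a metric with $\curv\geq -1$ and $\diam\leq 1$, there is a uniform bound $C(n)$. If some such space had three boundary components, one could glue copies along pairs of boundary components (an operation that preserves the lower curvature bound by the gluing theorem) to manufacture almost non-negatively curved spaces with arbitrarily many boundary components, violating $C(n)$. No limit is taken, no collapse analysed, and the thin-triangle lemma is irrelevant.
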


\begin{proof}
As shown by Wong \cite{Wong}, the work of Liu and Shen \cite{liushen} bounding the Betti numbers of Alexandrov spaces  also bounds the number of boundary components in an Alexandrov space. That is, an Alexandrov space of dimension $n$, $\diam \leq D$ and $\curv \geq k$ can have at most $C(n, D, k)$ boundary components.

Since every almost non-negatively curved space admits a metric with $\diam\leq 1$ and $\curv \geq -1$, this implies a uniform bound $C(n)$ on the number of boundary components in an almost non-negatively curved Alexandrov space of dimension $n$.

However, if an almost non-negatively curved space had three boundary components, then by gluing copies of the space along boundary components it would be possible to produce almost non-negatively curved spaces with arbitrarily many boundary components, thus violating this bound. This contradiction demonstrates the result.
\end{proof}

\section{Case 1: The circle acts with isolated fixed points}\label{s3}

Recall that the strategy of proof for the Main Theorem \ref{main} is to consider two separate cases: Case 1,  where the circle action has only isolated fixed points, and Case 2, where there is a codimension-two fixed-point set. The goal of this section is to prove Proposition \ref{p:isolated}, which addresses Case 1.

The first step is to use Lemmas \ref{l:defect} and \ref{thin} to bound the number of isolated fixed points, which we will accomplish in Proposition \ref{p:4bound}. 
We observe that, for the case of non-negative curvature, geometric arguments were used in \cite{K} and \cite{GW} to rule out the presence of a fifth fixed point. Both papers rely on the fact that, at any of the five fixed points, precisely two of the six angles will equal $\pi/2$ and that the total angle at any vertex of the tetrahedron formed by any $4$ points must be $\pi$. However, once the rigidity of non-negative curvature is relaxed, of the six angles at any of the five fixed points, there could be four angles all of which are close to $\pi/2$, so that the argument breaks down in almost non-negative curvature.

In order to prove the upper bound of $4$ isolated fixed points, we need to prove a technical lemma, Lemma \ref{Xst}, which will follow once we have proven 
Sublemmas  \ref{Xst1}, and \ref{Xst2}. 

Before we attempt the proofs of Sublemmas \ref{Xst1}, and \ref{Xst2}, we need to better understand the geometry of $X_{s, t}$, $(|s|,|t|) \neq (1,1)$, which we recall is the quotient of $S^3$ under an isometric circle action  as in Lemma \ref{xtlemma}.
The diameter of $X_{s, t}$  is only achieved   
on the unique pair of antipodal points, $p$ and $q$, such that one or both are singular, depending on the values of $s$ and $t$.
Then, given any two points $v, w\in X_{s, t}$,  whose distance is sufficiently close to $\pi/2$,  we see that $v$  must be close to one of $p$ or $q$ and $w$ must be close to the other. That is, we obtain the following sublemma.

\begin{sublemma}\label{Xst0} Let $p, q\in X_{s, t}$ such that $\dist(p, q)=\pi/2$. Suppose that $p$ is a singular point in $X_{s, t}$. For any $\epsilon>0$ there is a $\delta>0$ such that if $v_0, v_1\in X_{s, t}$ with $\dist(v_0, v_1)\geq \pi/2 -\delta$, the following statements hold for some $i\in \{0, 1\}$, where we read $i+1$ modulo $2$.
\begin{enumerate}
\item  $|\dist(v_i, p)-\dist(v_{i+1}, q)|\leq \delta$;
\item $\dist(v_{i+1}, q)\leq (1+\epsilon)\delta$; and 
\item $\dist(v_{i}, p)\leq (2+\epsilon)\delta$.  \end{enumerate}
\end{sublemma}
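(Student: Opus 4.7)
The plan is to exploit two specific features of $X_{s, t}$: the structural identity $\dist(v, p) + \dist(v, q) = \pi/2$ for every $v \in X_{s, t}$, and a Toponogov comparison at the singular vertex $p$, whose space of directions has diameter at most $\pi/2$. The identity follows by projecting a horizontal lift of the meridian in $S^3$ (connecting $\pi^{-1}(p) = \{z_2 = 0\}$ and $\pi^{-1}(q) = \{z_1 = 0\}$ through any preimage of $v$), which yields a minimizing geodesic of length $\pi/2$ from $p$ through $v$ to $q$. Setting $a = \dist(v_0, p)$ and $c = \dist(v_1, p)$, we therefore have $\dist(v_0, q) = \pi/2 - a$ and $\dist(v_1, q) = \pi/2 - c$, and the triangle inequalities $\dist(v_0, v_1) \leq a + c$ and $\dist(v_0, v_1) \leq \pi - a - c$ together with $\dist(v_0, v_1) \geq \pi/2 - \delta$ force $a + c \in [\pi/2 - \delta, \pi/2 + \delta]$, which is exactly statement (1).

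For the sharper bounds I would apply Toponogov to the triangle $v_0 p v_1$ in $X_{s, t}$, which has $\curv \geq 1$ as a quotient of $S^3$. Since $p$ is singular, $\Sigma_p$ is a circle of length $2\pi/s$ with $s \geq 2$, of diameter at most $\pi/2$, so the angle at $p$ is at most $\pi/2$, as is the $S^2$-comparison angle. The spherical law of cosines then yields
\[
\cos a \cos c \;\leq\; \cos(\dist(v_0, v_1)) \;\leq\; \sin \delta.
\]
The rest of the argument is a case analysis on the sign of $a + c - \pi/2$; I would treat $a + c \geq \pi/2$ in detail, the opposite case being symmetric. Writing $\mu = a + c - \pi/2 \in [0, \delta]$ and using $\cos c = \sin(a - \mu)$, the estimate becomes $\cos a \cdot \sin(a - \mu) \leq \sin \delta$. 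One factor must be $O(\delta)$: either $\cos a$ is small, so $a$ is close to $\pi/2$ and $v_0$ is close to $q$ (take $i = 1$); or $\sin(a - \mu)$ is small, so $a$ is close to $\mu$ and $v_0$ is close to $p$ (take $i = 0$). A Taylor expansion in either sub-case gives $\dist(v_{i+1}, q) \leq \delta + O(\delta^3) \leq (1+\epsilon)\delta$ for $\delta$ chosen small in terms of $\epsilon$, establishing (2); and then (3) follows from (1) and (2) by the triangle inequality.

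The main technical subtlety is ensuring that the constant in (2) is the sharp $1 + \epsilon$ rather than something like $\sqrt{2} + \epsilon$. This relies on the algebraic rewriting $\cos c = \sin(a - \mu)$ coming from the structural constraint $a + c \approx \pi/2$: this concentrates the smallness of $\cos a \cos c$ into a single factor whose complement is bounded below by $1 + O(\delta)$, rather than distributing it as $\sqrt{\cos a \cos c}$ across both. Without this reduction, Toponogov alone would produce a strictly worse constant.
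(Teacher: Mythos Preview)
Your argument is correct, and for Part~1 and Part~3 it is essentially identical to the paper's: both rely on the meridian identity $\dist(v,p)+\dist(v,q)=\pi/2$ together with the triangle inequality.

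For Part~2 the two approaches diverge. The paper first invokes a soft compactness argument (uniqueness of the diameter-realizing pair $\{p,q\}$) to conclude that, for some $i$, $\dist(v_i,p)$ and $\dist(v_{i+1},q)$ are already small; it then applies first variation of arc length for $\dist(\cdot,v_{i+1})$ along a geodesic from $p$ to $v_i$, uses $\beta=\measuredangle v_{i+1}pv_i\le\pi/2$ at the singular vertex, and solves the resulting quadratic-type inequality $\dist(v_{i+1},q)\le \delta + C(\delta)(\dist(v_{i+1},q)+\delta)^2$. Your route bypasses the compactness step entirely: Toponogov with $\curv\ge 1$ and the angle bound at $p$ give $\cos a\cos c\le\sin\delta$, and the rewriting $\cos c=\sin(a-\mu)$ turns this into $\sin(\dist(v_0,q))\sin(\dist(v_1,q))\le\sin\delta$, from which the choice of $i$ and the sharp bound emerge algebraically via a bootstrap (one factor $O(\sqrt{\delta})$ forces the other to be $1-O(\delta)$, hence the first is in fact $\delta+O(\delta^3)$). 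Your approach is a little cleaner in that the case split on $i$ falls out of the algebra rather than being imported from compactness; the paper's approach has the minor advantage of using only first variation rather than full spherical comparison. One small remark: your phrase ``bounded below by $1+O(\delta)$'' for the complementary factor should read $1-O(\delta^2)$; the bootstrap you sketch is exactly what is needed to justify the sharp constant $1+\epsilon$ and you identify it correctly in the final paragraph.
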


 \begin{proof}
 Using the triangle inequality, we have 
 \begin{align*}
 \dist(v_i, v_{i+1}) &\leq \dist(v_i, p) +\dist(p, v_{i+1}) \\
 &=\dist(v_i, p) + \pi/2 -\dist(v_{i+1}, q),
 \end{align*}
 since $\dist(p, v_{i+1})) + \dist(v_{i+1}, q)=\pi/2$. 
 But $\dist(v_i, v_{i+1})\geq \pi/2 -\delta$ and therefore 
 $\dist(v_i, p)-\dist(v_{i+1}, q)\leq \delta$. This proves Part 1.
 
 To prove Part 2, we note that since  $\dist(v_i, v_{i+1})\geq \pi/2 -\delta$ and by compactness of $X_{s, t}$, and the uniqueness of $p$ and $q$, we have  $\dist(v_i, p), \dist(v_{i+1}, q)\leq C'(\delta)$, for some $i\in \{0, 1\}$, where  $\lim_{\delta\rightarrow 0}C'(\delta)=0$. It then follows from first variation of arc length that 
 $$ \dist(v_i, v_{i+1})\leq \dist(v_{i+1}, p) -\cos(\beta)\dist(v_i, p) + C(\delta)(\dist(v_i, p))^2,$$
 where $\beta=\measuredangle v_{i+1}pv_i$, and $C(\delta)=C(C'(\delta))$, with $\lim_{\delta\rightarrow 0}C(\delta)=0$. 
 Then, since $p$ is singular, it follows that $\beta\leq \pi/2$ and we have 
\begin{align*} \pi/2 -\delta\leq \dist(v_i, v_{i+1})&\leq \dist(v_{i+1}, p)  + C(\delta)(\dist(v_i, p))^2\\
&\leq \pi/2-\dist(v_{i+1}, q) +C(\delta)(\dist(v_i, p))^2\\
&\leq \pi/2-\dist(v_{i+1}, q) +C(\delta)(\dist(v_{i+1}, q) +\delta)^2,
\end{align*}
by Part 1.
Thus $$\dist(v_{i+1}, q)\leq \delta + C(\delta)(\dist(v_{i+1}, q) +\delta)^2.$$
Then either 
$$\dist(v_{i+1}, q)\leq \delta +4C(\delta)(\dist(v_{i+1}, q))^2 \textrm{ or } \dist(v_{i+1}, q)\leq \delta +4C(\delta)\delta^2.$$
 In the first case, since $1-4C(\delta)(\dist(v_{i+1}, q)>0$ and using a Taylor expansion, we see that 
 $$\dist(v_{i+1}, q)\leq \delta(1 +8C(\delta)\dist(v_{i+1}, q))\leq \delta(1+4\pi C(\delta)),$$ for small enough $\delta$. In the second case, noting trivially that $\delta<\pi$, we can show that the same inequality holds.    We set $4\pi C(\delta)<\epsilon$, and with this choice 
 we have proven Part 2.
 
 Part 3 follows by combining Parts 1 and 2.
 \end{proof}
 We now proceed to prove Sublemmas  \ref{Xst1}, and \ref{Xst2}.
Let $S=\left\lbrace p_i \right\rbrace_{i=1}^5$ be a set of five distinct points in an Alexandrov space $X$ of almost non-negative curvature
 with $\Sigma_{p_i}=X_{s_i, t_i}$ for each $p_i \in S$. 
We denote by $v_{ij} \in \Sigma_{p_i}$ the direction of a geodesic from $p_i$ to $p_j$.
	The set $S$ converges to some finite $S_{\infty} \subset X_{\infty}$ with $1 \leq \left| S_{\infty} \right| \leq 5$.

\begin{sublemma}\label{Xst1} Let $\{(X, \dist_{n})\}_{n=1}^{\infty}$ be an almost non-negatively curved sequence of 3-dimensional Alexandrov spaces.   Suppose that $S$ is defined as above and that $|S_{\infty}|\leq 4$ in $X_{\infty}$. Then there is a 
 $\delta > 0$ such that for some $k$ and for sufficiently large $n$,  $$\xt_4 (\lbrace v_{kl} : k \neq l \rbrace) \leq \pi/3 - \delta.$$ 
\end{sublemma}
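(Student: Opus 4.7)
The plan is to pass to a subsequential limit in each $\Sigma_{p_k} = X_{s_k, t_k}$, which is compact, so that for each $k$ the four directions $v_{kl}^{(n)}$ with $l \ne k$ converge to limits $\bar{v}_{kl} \in X_{s_k, t_k}$. Since the sum of pairwise distances is a continuous function of the points, it suffices to find some $k$ whose limit $4$-tuple has $4$-extent strictly less than $\pi/3$; the required uniform $\delta$ then follows for large $n$. If $|S_\infty| = 1$, I would first rescale the metrics by the pairwise diameter of $S$, which leaves angles unchanged and only tightens the curvature bound, so as to arrange $|S_\infty| \ge 2$.

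Since $|S_\infty| \le 4$, pigeonhole gives a pair $(a,b)$ with $\dist_n(p_a, p_b) \to 0$. Having arranged $|S_\infty| \ge 2$, there is an index $k \notin \{a,b\}$ with $p_k^\infty \ne p_a^\infty$, so both $\dist_n(p_a, p_k)$ and $\dist_n(p_b, p_k)$ remain bounded below by some $d_{\min} > 0$. Hyperbolic triangle comparison, in the spirit of Lemma \ref{thin}, with $\curv \ge -1/n^2$ then forces $\measuredangle p_a p_k p_b \to 0$, hence $\bar{v}_{ka} = \bar{v}_{kb}$, and so the limit $4$-tuple at $p_k$ has at most three distinct values.

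The central analytic input is that in $X_{s,t}$, by Sublemma \ref{Xst0} the diameter $\pi/2$ is realised only on the unique antipodal pair of poles $\{p,q\}$, and so by Lemma \ref{xtlemma} the $4$-extent attains $\pi/3$ solely at the configuration $\{p,p,q,q\}$. Indeed, for a $4$-tuple $(x,x,y,z)$ with $y \ne z$, the sum $2\dist(x,y) + 2\dist(x,z) + \dist(y,z)$ cannot reach $2\pi$, because equality would force both $y$ and $z$ to be antipodal to $x$, contradicting $y \ne z$. Thus the limit $4$-extent $E_k$ satisfies $E_k < \pi/3$ whenever the limit $4$-tuple at $p_k$ has at least three distinct values.

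The main obstacle is to rule out the remaining possibility that, at every eligible $k$, the other two limit directions $\bar{v}_{kc}, \bar{v}_{kd}$ also coincide and land at the antipode of $\bar{v}_{ka}$, producing the forbidden $\{p,p,q,q\}$. I would handle this by a case analysis on the partition of $\{1,\ldots,5\}$ determined by $S_\infty$: in the generic partition $(2,1,1,1)$ the three non-colliding limit points form a non-degenerate triangle in $X_\infty$, so at some vertex at least one angle is positive, preventing simultaneous alignment of the two remaining directions there; in the awkward $(2,2,1)$ case I would take $k$ inside a colliding pair rather than at the isolated point and apply Sublemma \ref{Xst0} to localize any near-antipodal directions near the actual poles of $X_{s_k, t_k}$, then derive a geometric contradiction with the constraint $|S_\infty| \le 3$.
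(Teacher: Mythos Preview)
Your central step, that ``hyperbolic triangle comparison \dots\ forces $\measuredangle p_a p_k p_b \to 0$'' when $\dist_n(p_a,p_b)\to 0$ while $p_k$ stays far, is not justified and is where the argument breaks. In an Alexandrov space with $\curv\ge -1/n^2$, triangle comparison yields \emph{lower} bounds on angles, not upper bounds: the angle at $p_k$ is at least the model angle, which does tend to zero, but nothing prevents the actual angle from staying large (think of $p_a,p_b$ near the cut locus of $p_k$, approached from opposite sides). Lemma~\ref{thin}, which you invoke, controls the angle at a \emph{base} vertex of a thin triangle---one of the two close extremal points---forcing it near $\pi/2$; it says nothing about the apex angle at the far point $p_k$. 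Extremality of $p_k$ only caps that angle at $\pi/2$, and the angle-sum bound $\theta_a+\theta_b+\theta_k\ge\pi-\mu(1/n)$ together with $\theta_a,\theta_b\le\pi/2$ gives only the trivial $\theta_k\ge-\mu(1/n)$. In particular, in the partition $(2,1,1,1)$ your chosen $p_k$ lies in no colliding pair, and there is no evident mechanism to force $v_{ka}$ and $v_{kb}$ together.

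The paper avoids this by always placing the vantage point \emph{inside} a colliding cluster rather than outside it. At such a point $p_1$, with $p_2$ nearby, Lemma~\ref{thin} gives $\dist(v_{12},v_{1j})\approx\pi/2$ for every far $p_j$; the uniqueness of the near-diameter pair in $X_{s,t}$ (Sublemma~\ref{Xst0}, or the round case) then forces all the far-pointing directions $v_{1j}$ to cluster near a single pole of $\Sigma_{p_1}$. When at least three directions cluster (which occurs whenever $|S_\infty|\le 2$), this drives $\xt_4$ toward $\pi/4$ directly. When only two far directions are available (the case $3\le|S_\infty|\le 4$), their coincidence in the limit gives $\measuredangle yxz=0$ in $X_\infty$ by lower semicontinuity of angles, contradicting the fact that no three extremal limit points can be collinear. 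Your outline could be salvaged by moving the vantage point from $p_k$ to $p_a$, but then the $4$-extent bookkeeping and the case split would have to be rebuilt along these lines, and your subsequent ``main obstacle'' analysis (which is premised on $\bar v_{ka}=\bar v_{kb}$) no longer applies as written.
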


\begin{proof}
The proof is broken into two cases: either $|S_{\infty}|\leq 2$ or $|S_{\infty}|\geq 3$.

	Let us define a convergence map $\pi : S \to S_{\infty}$ so that $\pi (p_i) = \lim_{n \to \infty} p_i$. By passing to a subsequence, we may assume that the preimages $\pi^{-1}(x)$, for each $x \in S_{\infty}$, satisfy $\dist_n (p_i, p_j) < 1/n$ for each $p_i, p_j \in \pi^{-1}(x)$. Choose $d$ so that $\dist_{\infty} (x,y) > 2 d$ for each pair of distinct points $x,y \in S_{\infty}$. 		
	We will denote by $\dist$ the distance function on any space of directions, $\Sigma_{p_j}$, and, for simplicity, we will omit the dependence of this space on $n$.
	Seeking a contradiction, suppose that for all $k$ we have 
	$$\xt_4 (\lbrace v_{kl} : k \neq l  \rbrace) \to \pi/3 \text{ as } n \to \infty.$$ 
	
	Consider first the case in which $\left| S_{\infty} \right| \leq 2$. Then there is some $x \in S_{\infty}$ which is the limit of at least three points.
	
	Suppose that exactly three points converge to $x$, so that $\pi^{-1}(x) = \lbrace p_1, p_2, p_3 \rbrace$. 
	Apply Lemma \ref{thin} to the three thin triangles given by $(p_j, p_5, p_4)$ for $j=1,2,3$, as shown in Figure \ref{f:twocluster}(a), with $\epsilon < 1/n$ and $d_{\min}=d$, as chosen above. 
	 Then, as shown in Figure \ref{f:twocluster}(b), and since $\measuredangle p_j, p_5, p_4 = \dist (v_{5j}, v_{54})$  in $\Sigma_{p_5}$, Equation \ref{e:1} of Lemma \ref{thin} gives us that $$\pi/2 - \dist (v_{5j}, v_{54}) \leq  f(d_{\min}, 1/n, 1/n)=  \frac{1}{n d} + O(1/n^3), \text{ for } 1 \leq j \leq 3.$$ 
	
	\begin{figure}
		\begin{tabular}{c@{\hspace{1cm}}c}
			\begin{tikzpicture}
			\node[anchor=south west,inner sep=0] (image) at (0,0) {\includegraphics[width=0.115\textwidth]{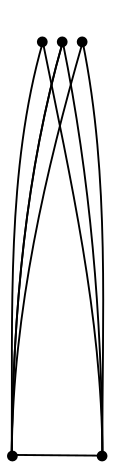}};
			\begin{scope}[
			x={(image.south east)},
			y={(image.north west)}
			]
			\node  at (0.38,0.95) {$p_1$};
			\node  at (0.58,0.95) {$p_2$};
			\node  at (0.78,0.95) {$p_3$};
			\node  at (0.23,0.06) {$p_4$};
			\node  at (0.79,0.06) {$p_5$};
			\end{scope}
			\end{tikzpicture}
			& 
			\begin{tikzpicture}
			\node[anchor=south west,inner sep=0] (image) at (0,0) {\includegraphics[width=0.2913\textwidth]{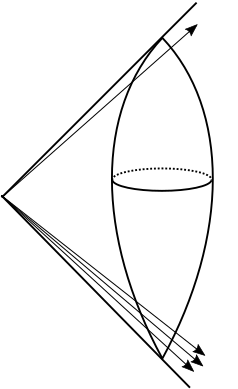}};
			\begin{scope}[
			x={(image.south east)},
			y={(image.north west)}
			]
			\node at (0.08,0.49) {$o$};
			\node  at (0.93,0.1) {$v_{51}$};
			\node  at (.91,0.06) {$v_{52}$};
			\node  at (0.88,0.02) {$v_{53}$};
			\node  at (0.89,0.94) {$v_{54}$};
			\node  at (0.7,0.45) {$\Sigma_{p_5}$};
			\end{scope}
			\end{tikzpicture}
			\\[0.5cm]
			(a) & (b) \\
		\end{tabular}
		\caption{One possible configuration with $|S_{\infty}|=2$. Where $|S_{\infty}| \leq 2$ there are always three thin triangles (a) guaranteeing the configuration shown in (b) within the cone on the
			space of directions.}
		\label{f:twocluster}
	\end{figure}
	
	In the case where $(|s_5|,|t_5|)=(1,1)$, then each $v_{5j}$, $1 \leq j \leq 3$, is close to the unique point antipodal to $v_{54}$. Hence the $v_{5j}$ are pairwise close to each other. In particular, it follows from the triangle inequality that 
	$$\dist (v_{5i}, v_{5j}) \leq  \frac{2}{n d} + O(1/n^3) \text{ for } 1 \leq i,j \leq 3.$$

	However, if $(|s_5|,|t_5|) \neq (1,1)$, it follows from  Parts 2 and 3 of Sublemma \ref{Xst0} that $v_{54}$ is close to some $\xi \in \Sigma_{p_5}$, which is either a singularity or the unique point antipodal to a singularity. In this case, each $v_{5j}$, $1 \leq j \leq 3$, is close to the unique point antipodal to $\xi$. 
	In particular, it follows from Parts 2 and 3 of Sublemma  \ref{Xst0} and the triangle inequality that 
	$$\dist (v_{5i}, v_{5j}) \leq  \frac{5}{n d} + O(1/n^3) \text{ for } 1 \leq i,j \leq 3.$$  
	
	In both cases, we obtain that
	$$\xt_4 (\lbrace v_{5l} : l \neq 5  \rbrace) \leq \frac{1}{2}\left(\frac{\pi}{2}+\frac{5}{nd}\right) + O(1/n^3) \xrightarrow[n \to \infty]{} \frac{\pi}{4} < \frac{\pi}{3},$$ a contradiction.

	Alternatively, $\lbrace p_1, p_2, p_3, p_4 \rbrace \subset \pi^{-1}(x)$. Let $y \in X_{\infty}$ so that $y\neq x$. Set $d'=\frac12 \dist_{\infty}(x,y)$.  
	Let $q_n \in (X, \dist_n)$ be such that $\lim_{n \to \infty}q_n = y$. We then apply Lemma \ref{thin} to the thin triangle given by $(q_n, p_1, p_j)$ 
	for $j=2,3,4$, now with $d_{\min}=d'$.
	We obtain at $\Sigma_{p_1}$ that, as before,  $$\dist (v_{1i}, v_{1j}) \leq \frac{5}{nd'}+O(1/n^3) \text{ for } 2 \leq i,j \leq 4$$ so that  $$\xt_4 (\lbrace v_{1l} : l \neq 1  \rbrace) \leq \frac{1}{2} \left(\frac{\pi}{2}+\frac{5}{nd'}\right) + O(1/n^3) \xrightarrow[n \to \infty]{} \frac{\pi}{4} < \frac{\pi}{3},$$ a contradiction.
	
	We now turn to the case  $3 \leq \left| S_{\infty} \right| \leq 4$.
Note first that, since these are extremal points, no three are \emph{collinear}, in the sense that no shortest path between two points of $S_{\infty}$ contains a third point of $S_{\infty}$.
	
	Suppose that $p_1, p_2 \in \pi^{-1}(x)$. Let $y=\pi(p_3)$ and $z=\pi(p_4)$. By renumbering, we may assume that $x$, $y$ and $z$ are all distinct. Then applying Lemma \ref{thin} to $(p_3, p_1, p_2)$ and $(p_4, p_1, p_2)$, we obtain that, at $\Sigma_{p_1}$, $$\pi/2 - \dist(v_{12},v_{1j}) \leq \frac{1}{n d} + O(1/n^3) \text{ for } j=3,4.$$ We can deduce as before that $$\dist(v_{13},v_{14}) \leq \frac{5}{n d} + O(1/n^3) \xrightarrow[n \to \infty]{} 0,$$ as shown in Figure \ref{f:threecluster}. It follows from the lower semi-continuity of angles in Alexandrov spaces (see Theorem 4.3.11 in \cite{BBI}) that $\measuredangle yxz = 0$, that is, the points $x$, $y$, and $z$ are collinear; a contradiction.
\end{proof}

	\begin{figure}
	\begin{tikzpicture}
	\node[anchor=south west,inner sep=0] (image) at (0,0) {\includegraphics[width=0.8\textwidth]{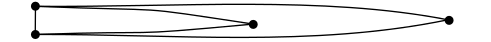}};
	\begin{scope}[
	x={(image.south east)},
	y={(image.north west)}
	]
	\node  at (0.57,0.54) {$p_3$};
	\node  at (0.96,0.54) {$p_4$};
	\node  at (0.03,0.9) {$p_1$};
	\node  at (0.03,0.1) {$p_2$};
	\end{scope}
	\end{tikzpicture}
	\caption{Where $|S_{\infty}| \geq 3$ the existence of two thin triangles guarantees collinearity.}
	\label{f:threecluster}
\end{figure}

\begin{sublemma}\label{Xst2} Let  $\{(X, \dist_{n})\}_{n=1}^{\infty}$, $S$ and $S_{\infty}$ be as in Sublemma \ref{Xst1}.   Suppose that $|S_{\infty}|=5$ and that there is a $p_i\in S$ such that $\Sigma_{p_i}$ is isometric to  $X_{s_i, t_i}$ with
 $(|s_i|,|t_i|) \neq (1,1)$.
Then there is a 
 $\delta > 0$ such that for some $k$ and for sufficiently large $n$,  $$\xt_4 (\lbrace v_{kl} : k \neq l \rbrace) \leq \pi/3 - \delta.$$ 
	\end{sublemma}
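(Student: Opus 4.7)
The plan is to take $k=i$ and bound the $4$-extent at $p_i$ by combining the strict inequality in Lemma \ref{xtlemma} with a Toponogov argument. First, I identify $\Sigma_{p_i}$ with the fixed metric space $X_{s_i, t_i}$ via the given isometry; after passing to a subsequence, I may assume that each direction $v_{il}^{(n)}$ (for $l \neq i$) converges to some $\bar{v}_{il} \in X_{s_i, t_i}$ and that each pairwise distance $\dist_n(p_l, p_m)$ converges to the positive limit $\dist_{\infty}(q_l, q_m)$, where positivity follows from $|S_\infty| = 5$.

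The main step is to show that the four limit directions $\{\bar{v}_{il} : l \neq i\}$ are pairwise distinct. Suppose to the contrary that $\bar{v}_{ij} = \bar{v}_{ik}$ for some distinct $j, k$, both different from $i$. Since $\measuredangle p_j p_i p_k$ equals the distance between $v_{ij}^{(n)}$ and $v_{ik}^{(n)}$ in $\Sigma_{p_i}$, this angle tends to $0$. By Toponogov comparison against the hyperbolic plane of curvature $-1/n^2$, the comparison angle at $p_i$ also tends to $0$; as $n \to \infty$ this comparison angle converges to the Euclidean comparison angle determined by the positive limit side lengths, which must then equal $0$. After relabelling, this forces the triangle equality $\dist_\infty(q_j, q_k) = \dist_\infty(q_i, q_k) - \dist_\infty(q_i, q_j)$, and a direct computation with the Euclidean law of cosines shows that the Euclidean comparison angle at $q_j$ in this degenerate triangle is $\pi$. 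Applying Toponogov once more at the vertex $p_j$ would then give $\liminf_{n \to \infty} \measuredangle p_i p_j p_k \geq \pi$, but the isometry $\Sigma_{p_j} \cong X_{s_j, t_j}$ forces $\measuredangle p_i p_j p_k \leq \diam(X_{s_j, t_j}) = \pi/2$ for every $n$, a contradiction.

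Once the four limits are known to be pairwise distinct, the hypothesis $(|s_i|,|t_i|) \neq (1,1)$ lets me apply the strict bound in Lemma \ref{xtlemma} to obtain $\xt_4(\{\bar{v}_{il} : l \neq i\}) < \pi/3$. Since the $4$-extent is continuous in its arguments, the same inequality persists with a fixed slack $\delta > 0$ for all sufficiently large $n$, yielding the desired conclusion.

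The hardest step is the distinctness claim. The almost-non-negatively-curved hypothesis only provides Toponogov comparison against model spaces of curvature $-1/n^2$ that flatten in the limit, so the degeneracy of a near-collinear triple at the central vertex $p_i$ must be propagated to a near-straight limit angle at the far vertex $p_j$ and contradicted there using the bound $\diam(\Sigma_{p_j}) = \pi/2$. Philosophically this is the mirror of the thin-triangle rigidity of Lemma \ref{thin}: there the extremality of the endpoints bounded their angles above by $\pi/2$; here the extremality at $p_j$ is what prevents the appearance of a limit angle close to $\pi$.
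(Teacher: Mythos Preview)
Your proof is correct and shares its core idea with the paper: both take $k=i$ and hinge on the observation that two of the limiting directions $\bar v_{il}$ in $\Sigma_{p_i}\cong X_{s_i,t_i}$ cannot coincide, since that would force a (limit) comparison angle of $\pi$ at an adjacent vertex $p_j$, contradicting $\diam(\Sigma_{p_j})\leq\pi/2$. The organization differs slightly. The paper argues by contradiction: if $\xt_4(\{v_{il}\})\to\pi/3$, then by Lemma~\ref{xtlemma} the directions must limit to a $4$-extender in $X_{s_i,t_i}$, which necessarily has repeated points; the resulting coincidence is then ruled out by the collinearity argument already recorded in Sublemma~\ref{Xst1} (three extremal points of $S_\infty$ cannot be collinear). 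You invert the logic, first proving distinctness of the $\bar v_{il}$ and then applying the strict inequality of Lemma~\ref{xtlemma} directly to the limit configuration. Your collinearity step is also carried out in $(X,\dist_n)$ via Toponogov rather than in $X_\infty$ via lower semicontinuity of angles; this makes your argument a touch more self-contained, as it does not rely on the limit points in $S_\infty$ being extremal. One small point: you pass to a subsequence to obtain convergence of the $v_{il}^{(n)}$, so to conclude the bound for \emph{all} sufficiently large $n$ you should add the standard remark that otherwise a subsequence with $\xt_4\to\pi/3$ would exist, to which the same argument applies.
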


\begin{proof}	
	 Suppose, without loss of generality, that $i=5$, so that $\Sigma_{p_5}=X_{s, t}$ with $(|s|,|t|) \neq (1,1)$
	 and, seeking a contradiction, that $\xt_4 (\Sigma_{p_5}) = \pi/3$.
	By Lemma \ref{xtlemma}, the $4$-extender in $\Sigma_{p_5}$ is given by $$\lbrace w_1, w_2, w_3, w_4 \rbrace \subset \Sigma_{p_5} \text{ with } w_1=w_2, w_3=w_4, \dist(w_1,w_3) = \pi/2.$$
	
	It follows that, passing if necessary to a subsequence, we may assume that
	$$\dist(v_{51},v_{52}), \dist(v_{53},v_{54}) < 1/n \xrightarrow[n \to \infty]{} 0.$$ 
	
	It follows that the points in each of the two sets $\{\pi(p_5), \pi(p_1), \pi(p_2)\}$,  and  $\{\pi(p_5), \pi(p_3), \pi(p_4)\}$, are collinear. As noted in Sublemma \ref{Xst1}, this contradicts the extremality of $S_{\infty}$.
\end{proof}

Combining Sublemmas \ref{Xst1} and \ref{Xst2}, we obtain the following lemma.

\begin{lemma}\label{Xst} Let $\{(X, \dist_{n})\}_{n=1}^{\infty}$ be an almost non-negatively curved sequence of 3-dimensional Alexandrov spaces.   Suppose that $S = \left\lbrace p_i \right \rbrace_{i=1}^5$ is a set of five distinct points in $X$ 
 with $\Sigma_{p_i}=X_{s_i, t_i}$ for each $p_i \in S$. 
 Suppose there is a $j$ such that
 $(|s_j|,|t_j|) \neq (1,1)$.
Then there is a 
 $\delta > 0$ such that for some $k$ and for sufficiently large $n$,  $$\xt_4 (\lbrace v_{kl} : k \neq l \rbrace) \leq \pi/3 - \delta.$$ 
	\end{lemma}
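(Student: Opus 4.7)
The proof plan is a straightforward case analysis on the cardinality of the limit set $S_\infty$, using the two sublemmas as black boxes. Pass to a subsequence of $\{(X,\dist_n)\}$ so that the five-point set $S = \{p_i\}_{i=1}^5$ converges (in the Gromov--Hausdorff sense) to a finite set $S_\infty \subset X_\infty$. Since $|S_\infty| \leq |S| = 5$, the cardinality takes a value in $\{1,2,3,4,5\}$, and I split the analysis according to whether $|S_\infty| \leq 4$ or $|S_\infty| = 5$.

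In the first case, $|S_\infty| \leq 4$, Sublemma \ref{Xst1} applies directly with no further input, producing a $\delta_1 > 0$ and an index $k_1$ such that $\xt_4(\{v_{k_1 l} : k_1 \neq l\}) \leq \pi/3 - \delta_1$ for sufficiently large $n$. Note that this case does not use the hypothesis on $(|s_j|,|t_j|)$ at all — Sublemma \ref{Xst1} holds regardless.

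In the second case, $|S_\infty| = 5$, I invoke the hypothesis of Lemma \ref{Xst} directly: there exists $j$ with $(|s_j|,|t_j|) \neq (1,1)$, and this is precisely the hypothesis of Sublemma \ref{Xst2}. Applying it yields a $\delta_2 > 0$ and an index $k_2$ (with the sublemma forcing $k_2 = j$ in effect, via the argument's set-up) so that the same extent bound $\xt_4(\{v_{k_2 l} : k_2 \neq l\}) \leq \pi/3 - \delta_2$ holds for sufficiently large $n$.

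Taking $\delta = \min(\delta_1, \delta_2) > 0$ and choosing $k$ to be whichever of $k_1, k_2$ corresponds to the case that actually occurs (determined by the cardinality of $S_\infty$), the conclusion follows. There is essentially no obstacle at this stage: all the technical work — the thin-triangle estimate, the reduction via Sublemma \ref{Xst0}, and the collinearity argument ruling out $|S_\infty| \in \{3,4\}$ — has already been accomplished in the sublemmas, and the present lemma is simply the synthesis of the two exhaustive cases.
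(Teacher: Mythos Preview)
Your proposal is correct and matches the paper's approach exactly: the paper simply states that Lemma~\ref{Xst} is obtained by combining Sublemmas~\ref{Xst1} and~\ref{Xst2}, and your case split on $|S_\infty|\leq 4$ versus $|S_\infty|=5$ is precisely that combination spelled out.
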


	We are now in a position to prove that there are only $4$ isolated fixed points of the $S^1$ action.
\begin{proposition}\label{p:4bound}
	Let $S^1$  act
	smoothly and effectively  on a closed, smooth, simply-connected $4$-manifold $M$,  admitting an almost non-negatively curved sequence of $S^1$-invariant Riemannian metrics. If the fixed-point set of the action contains only isolated points, then there are at most four of these.
\end{proposition}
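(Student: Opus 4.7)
I would argue by contradiction, assuming $M$ has at least five isolated fixed points. The orbit space $M^* = M/S^1$ carries an almost non-negatively curved $3$-dimensional Alexandrov structure (after rescaling each metric to diameter $1$), and the image of each fixed point is an interior singular point whose space of directions is isometric to $X_{s_j,t_j}$ for the local weights $(s_j,t_j)$. Proposition \ref{p:5bound} then forces the number of fixed points to be at most five; assume for contradiction that it equals five, with images $p_1^*,\dots,p_5^* \in M^*$.

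The heart of the argument is an angle-counting estimate in $M^*$ patterned on the proof of Proposition \ref{p:5bound}. The five points determine $\binom{5}{3}=10$ geodesic triangles, and by Lemma \ref{l:defect} the total angle sum is at least $10\pi - 10\mu(1/n)$ with $\mu(1/n)\to 0$. At each vertex $p_j^*$ the six angles based there equal $6\cdot\mathrm{xt}_4(\{v_{jl}\}_{l\ne j})$ computed in $\Sigma_{p_j^*}$, which is at most $2\pi$ by Lemma \ref{xtlemma}, so summing over the five vertices yields a matching upper bound of $10\pi$. A contradiction follows as soon as one produces a uniform $\delta>0$ and a vertex $p_k^*$ with $\mathrm{xt}_4(\{v_{kl}\})\le \pi/3-\delta$ for large $n$, for then the upper bound drops to $10\pi-6\delta$, which for $n$ large is strictly less than $10\pi-10\mu(1/n)$.

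Such an improvement is supplied directly by Lemma \ref{Xst} whenever some $p_j$ has weights with $(|s_j|,|t_j|)\neq(1,1)$. To complete the proof, it remains to rule out the subcase in which every fixed point has weights $(\pm 1,\pm 1)$. In this subcase the local isotropy away from each fixed point is trivial, so no exceptional-orbit arc ends at any fixed point, and Proposition \ref{p:Fint} forces $E^* = \emptyset$; the $S^1$-action is therefore semi-free. Then $M \setminus F \to M^* \setminus F^*$ is a principal $S^1$-bundle $P$ over a closed topological $3$-manifold with five points removed, and a standard homotopy computation together with simple-connectedness of $M$ gives $M^*\cong S^3$. Orienting the five small spheres $\Sigma_i$ around the $p_i^*$ so that $\sum_i[\Sigma_i]=0$ in $H_2(S^3\setminus\{5\text{ pts}\};\mathbb{Z})$, the Chern numbers $e_i = c_1(P)([\Sigma_i])$ satisfy $\sum_i e_i = 0$. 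But near a fixed point of weights $(\pm 1,\pm 1)$ the bundle restricts to a Hopf bundle, so $e_i\in\{\pm 1\}$, and the sum of an odd number of $\pm 1$'s cannot vanish; contradiction.

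I expect the principal obstacle to be precisely this residual all-$(1,1)$ subcase, since the geometric bound of Lemma \ref{Xst} cannot be invoked for $X_{1,1}\cong S^2(1/2)$: the $4$-extent $\pi/3$ is attained there by four distinct directions in a ``square'' configuration of two antipodal pairs, so there is no purely geometric improvement in $\mathrm{xt}_4$. The contradiction in this subcase is therefore not Alexandrov-geometric but topological, resting on the parity obstruction above for the principal $S^1$-bundle over $S^3$.
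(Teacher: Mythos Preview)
Your argument is correct and tracks the paper's proof closely through the angle-counting step and the appeal to Lemma~\ref{Xst}. The only genuine divergence is in how you dispose of the residual all-$(1,1)$ subcase. The paper's primary argument there invokes Bott's residue theorem: since the isotropy action on each normal sphere is free, the signature is $\sigma(M)=\tfrac{1}{3}\sum_{i=1}^{5}\pm 2\in\zzz$, forcing $\sigma(M)=\pm 2$, which is incompatible with $\chi(M)=5$ for a closed simply-connected $4$-manifold.

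Your parity argument via the Chern class of the principal bundle over $S^3\setminus\{5\text{ pts}\}$ is instead essentially the content of the paper's Remark~\ref{r:fifthpoint}, recorded there as an alternative: once the action is seen to be semi-free, Church--Lamotke or Fintushel's weighted-orbit-space formalism forces an even number of fixed points. You have given a self-contained version of that parity obstruction, computing directly that $\sum_i e_i=0$ with each $e_i=\pm 1$. Both endings are valid; the signature route is shorter to state once Bott's formula is on hand, while yours is more elementary and avoids residue formulae altogether.
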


\begin{proof}
	By Proposition \ref{p:5bound} there are at most five isolated fixed points.  Suppose that there are exactly five fixed points, and consider their images, $\left\lbrace p_i \right\rbrace_{i=1}^5 = \fstar \subset \mstar$. These points define 10 triangles. Let $v_{jl} \in \Sigma_{p_j}$ be the direction of a geodesic from $p_j$ to $p_l$. If we apply similar arguments to those of Proposition \ref{p:5bound}, we will find only that the inequality \[10 (\pi - \mu(1/n)) \leq 10 \pi\] must hold, but since $\mu(1/n) > 0$ this is always true.
	
	The right hand side of this inequality stems from the statement $\xt_4 (\Sigma_{p_j}) \leq \pi / 3$.
	However, we can restrict our attention to calculating the potentially smaller value $\xt_4 (\lbrace v_{jl} : l \neq j \rbrace)$. 
	By Lemma \ref{Xst} we have that if, for some $i$, $\Sigma_{p_i}$ is isometric to some $X_{s, t}$ with $(|s|,|t|) \neq (1,1)$, the inequality
$$\xt_4 (\lbrace v_{ij} : j \neq i \rbrace) \leq \pi/3 - \delta$$
holds for some fixed $\delta>0$ and sufficiently large $n$. In that case, 
 \[10 (\pi - \mu(1/n)) \leq 10 \pi - \delta\] would hold, yielding a contradiction for sufficiently large $n$.
 
 It follows that, for all $i$, the space $\Sigma_{p_i}$ must be isometric to $X_{1,1}$. In other words, the action of the isotropy group $S^1$ on the unit sphere at each fixed point must be free.
	 	 
	The residue theorem of Bott \cite{Bott} then implies that the signature of $M$ is given by \[\sigma(M) = \frac{1}{3} \left( \sum_{i=1}^5 \pm 2 \right) \in \zzz,\] which can only be $\pm 2$. However, there is no closed, simply-connected 4-manifold with Euler characteristic 5 and signature $\pm 2$, and so there cannot be five fixed points, completing the proof.

\end{proof}

\begin{remark}\label{r:fifthpoint}
	Rather than using a signature argument, one can also rule out the possibility of a fifth fixed point by noting that the action is {\em semi-free}, that is, there are no points of finite isotropy. This is because Proposition \ref{p:Fint} requires the closure of any component of finite isotropy to intersect the set of fixed points, but as shown above the isotropy action on the unit sphere at each fixed point is free. By work of Church and Lamotke \cite{CL}, or from Fintushel's definition of a legally weighted 3-manifold \cite[5.2]{F1},  semi-free circle actions on closed, smooth, simply-connected 4-manifolds always have an even number of fixed points.
\end{remark}

\begin{proposition}\label{p:isolated}
	Let $S^1$  act
	smoothly and effectively  on a closed, smooth, simply-connected $4$-manifold $M$,  admitting an almost non-negatively curved sequence of $S^1$-invariant Riemannian metrics. If the action fixes only isolated fixed points, then there is an invariant metric of non-negative curvature.
\end{proposition}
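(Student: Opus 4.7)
The plan is to combine the fixed-point bound of Proposition \ref{p:4bound} with Kobayashi's theorem to pin down the diffeomorphism type of $M$, then use the equivariant classification of smooth $S^1$-actions on simply-connected $4$-manifolds to match the given action with one on the non-negatively curved list already cited, from which the invariant metric is inherited.

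First I would observe that, by Proposition \ref{p:4bound}, the isolated fixed-point set $F = \Fix(M;S^1)$ has $|F| \leq 4$, and Kobayashi's theorem \cite{Ko} gives $\chi(M) = \chi(F) = |F|$. Since $M$ is closed and simply-connected we have $\chi(M) \geq 2$, so $|F| \in \{2,3,4\}$ and hence $b_2(M) \in \{0,1,2\}$. Combining this with the existence of a smooth $S^1$-action and the known topology of closed, simply-connected $4$-manifolds admitting such an action (Fintushel \cite{F1}), the diffeomorphism type of $M$ is forced onto the list $S^4$, $\ccc P^2$, $S^2\times S^2$, $\ccc P^2 \# \pm \ccc P^2$, corresponding to $|F| = 2, 3$, and $4$ (with three options in the last case).

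Next I would identify the action up to equivariant diffeomorphism using the weighted orbit space data provided by Proposition \ref{p:Fint}. Because all fixed points are isolated, $\partial \mstar = \emptyset$ and $\mstar$ is a closed $3$-manifold (Lemma \ref{sconn}) carrying the finite sets $\fstar$ and $\estar$, each arc of $\estar$ joining a pair of distinct points of $\fstar$ and weighted by the order of its finite isotropy. For each candidate diffeomorphism type of $M$, this labelled orbit-space data is classified (by Fintushel \cite{F1}, together with Montgomery--Yang and Orlik--Raymond for the low-$b_2$ cases), and in every admissible combinatorial case the resulting action coincides equivariantly with one appearing in the non-negatively curved classification cited at the start of the paper: a linear $S^1$-action on $S^4$ or $\ccc P^2$, or an $S^1$ sub-action of a standard $T^2$-action on $S^2\times S^2$ or $\ccc P^2 \# \pm \ccc P^2$.

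Finally, each of these standard actions is isometric for a known invariant non-negatively curved metric (round on $S^4$, Fubini--Study on $\ccc P^2$, and the toric metrics of \cite{OR1} and \cite{GGK} in the $b_2 = 2$ cases); pulling such a metric back under the equivariant diffeomorphism produces the desired invariant non-negatively curved metric on $M$. The main obstacle is the equivariant matching step in the $|F|=4$ case, where several inequivalent smooth $S^1$-actions exist a priori on each of $S^2\times S^2$ and $\ccc P^2 \# \pm \ccc P^2$; the hypothesis that all fixed points are isolated, together with the constraints on arcs of exceptional orbits imposed by Proposition \ref{p:Fint}, is what restricts the admissible weighted orbit spaces to those arising from toric reductions.
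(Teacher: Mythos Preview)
Your argument correctly identifies the diffeomorphism type of $M$ from the bound $|F|\leq 4$, but the equivariant matching step has a genuine gap: you have not used the curvature hypothesis to control the \emph{embedding} of $\fstar\cup\estar$ inside $\mstar\cong S^3$. Fintushel's classification says the equivariant diffeomorphism type is determined by the weighted orbit space, which includes the isotopy class of the singular graph in $S^3$. Proposition~\ref{p:Fint} only tells you that $\estar$ is a union of open arcs with distinct endpoints in $\fstar$; it places no restriction on knotting, linking, or on whether a closed curve in $\fstar\cup\estar$ passes through all of $\fstar$. So once $|F|=2,3,4$ is known, there remain infinitely many admissible weighted orbit spaces (one for each knot type, and several graph configurations), and most of these correspond to $S^1$-actions that are \emph{not} sub-actions of a torus action and are not known to preserve any non-negatively curved metric. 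Your final sentence asserts exactly the step that fails.

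The paper closes this gap by exploiting the curvature hypothesis a second time, beyond the fixed-point count. For any closed curve $\gamma\subset\fstar\cup\estar$, the double branched cover $\mstar_2(\gamma)$ is again almost non-negatively curved by Lemma~\ref{ram}, and so is its universal cover; counting the lifted singular points against Proposition~\ref{p:5bound} forces $|\pi_1(\mstar_2(\gamma))|\leq 2$, which by the knot-recognition theorem of Grove--Wilking \cite[Theorem~C]{GW} means $\gamma$ is unknotted. A similar count, together with Fintushel's weight condition \cite[Lemma~5.1]{F1}, shows that such a $\gamma$ is unique and contains all of $\fstar$. Only after these facts are established does the orbit space land in the short list handled by the disk-bundle decomposition of \cite[\S3]{GW}. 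To repair your proof you would need to insert precisely this unknotting and uniqueness argument between your first and second paragraphs.
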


\begin{proof}
	By Lemma \ref{sconn} and Proposition \ref{p:Fint}, 
	 the orbit space $\mstar$ is homeomorphic to $S^3$ and $\estar$ comprises arcs in $S^3$ joining the fixed points, of which, by Proposition \ref{p:4bound}, there are at most four. Since the number of fixed points gives $\chi(M)$, the Euler characteristic of $M$, the fact that $\chi(M) \geq 2$ for simply-connected 4-manifolds ensures that there are at least two fixed points.

	At this stage, the arguments made by Grove and Wilking in \cite{GW} in classifying isometric circle actions on non-negatively curved 4-manifolds  all carry through.
	We summarize these arguments for the sake of completeness. 
	
	First, for any closed curve $\gamma\subset\estar \cup \fstar$, we consider the double branched cover over $\gamma$, $M^*_2(\gamma)$. By Lemma \ref{ram}, $M^*_2(\gamma)$ is almost non-negatively curved. Moreover, its universal cover, $\widetilde{M^*_2}(\gamma)$, is also almost non-negatively curved. Observe that $\widetilde{M^*_2}(\gamma)$ must have at least $2|\pi_1(M^*_2(\gamma))|$ points with spaces of directions isometric to some $X_{s,t}$. By Proposition \ref{p:5bound}, $2|\pi_1(M^*_2(\gamma))|\leq 5$. Therefore $|\pi_1(M^*_2(\gamma))|\leq 2$. 	
	
	By Theorem C of \cite{GW}, we can use the topology of $M^*_2(\gamma)$ to recognize whether $\gamma$ is knotted. Namely, $\gamma$ is knotted if and only if the order of the fundamental group $|\pi_1(M^*_2(\gamma))| \geq 3$. It follows that $\gamma$ is the unknot.

This technique can also be used to show that $\gamma$ must pass through all of the $S^1$-fixed points, as follows. Recall that Proposition \ref{p:Fint} says that $\gamma$ cannot contain just one isolated fixed point and so, by counting singularities in $\widetilde{M^*_2}(\gamma)$, the only configuration we still need to rule out is where $\estar \cup \fstar=\{p\}\cup \alpha$, where $p$ is an isolated fixed point and $\alpha$ is a bi-angle consisting  of two distinct shortest paths in $\estar$ with common endpoints in $\fstar$. However, this configuration is ruled out by  Lemma 5.1 of Fintushel \cite{F1}, since the weight assigned to a closed curve is $0$ and to an isolated point $\pm 1$.

 Therefore $\mstar \cong S^3$ with $\fstar$ consisting of two, three or four isolated points and $\estar$ making up arcs between the points of $\fstar$,  so that any closed curve in $\estar \cup \fstar$ is unique, unknotted and contains all of $\fstar$  (cf. Theorem 2.5 in \cite{GW}). The decomposition of $M$ into two disk bundles described in Section $3$ of \cite{GW} does not depend on $M$ itself being non-negatively curved, but rather on the consequences of that fact for the topology of the orbit space, and so applies in our case.
\end{proof}

\section{Case 2: Fixed-point-homogeneous actions}\label{s4}

A fixed-point-homogeneous action is one where the orbit space has a boundary component corresponding to a component of the fixed-point set. By classifying these actions in Proposition \ref{FPHcase} we will complete the proof of the Main Theorem \ref{main}.

Since the action of $S^1$ on $M$ is fixed-point homogeneous, we see immediately that the orbit space $\mstar$  is a simply-connected, 
almost non-negatively curved $3$-manifold with boundary by Lemma \ref{sconn}. By Proposition \ref{p:Fint}, the union of $\d \mstar$ with a number of isolated singular points makes up $\fstar$,
while the points in $\estar$, the image of components of finite isotropy, comprise arcs joining the isolated points of $\fstar$. 

We can identify the topology of $\mstar$ using the following well-known fact.

\begin{lemma}
	If $Y$ is a simply-connected 3-manifold with $m$ boundary components, then $Y$ is homeomorphic to $S^3$ with $m$ copies of $D^3$ removed.
\end{lemma}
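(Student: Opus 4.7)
The plan is to cap off the boundary components, reduce to the closed simply-connected case, and then invoke the Poincar\'{e} conjecture.

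First, I would show that each boundary component of $Y$ is a $2$-sphere. Since $Y$ is simply-connected, it is orientable, so $\partial Y$ is an orientable closed surface. From the long exact sequence of the pair $(Y, \partial Y)$, we have
\begin{equation*}
H_2(Y,\partial Y) \longrightarrow H_1(\partial Y) \longrightarrow H_1(Y).
\end{equation*}
The right-hand group vanishes by Hurewicz, and Poincar\'{e}--Lefschetz duality gives $H_2(Y,\partial Y) \cong H^1(Y) \cong \mathrm{Hom}(H_1(Y),\mathbb{Z}) = 0$. Hence $H_1(\partial Y) = 0$, and the classification of closed orientable surfaces forces each component of $\partial Y$ to be $S^2$.

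Next, I would glue a closed $3$-ball $D_i^3$ to $Y$ along each boundary $2$-sphere, producing a closed $3$-manifold $\hat Y$. Since $S^2$ is simply-connected and each $D_i^3$ is simply-connected, a repeated application of the Seifert--van Kampen theorem (with a small collar neighborhood of each $\partial D_i^3 \cong S^2$ as the overlap) shows that $\hat Y$ is simply-connected. By the Poincar\'{e} conjecture (Perelman), $\hat Y$ is homeomorphic to $S^3$. Removing the interiors of the $m$ glued balls recovers $Y$, exhibiting it as $S^3$ with $m$ open $3$-balls removed, which is $S^3$ with $m$ copies of $D^3$ removed up to homeomorphism.

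The only potential obstacle in this plan is the appeal to the Poincar\'{e} conjecture, which is a deep theorem; however, this is a standard and accepted tool, and no alternative elementary route gives the result in full generality. Every other step is a routine verification using the homological consequences of simple-connectedness together with the classification of closed orientable surfaces.
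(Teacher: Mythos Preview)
Your proof is correct and follows essentially the same route as the paper: both use Lefschetz duality and the long exact sequence of the pair to deduce $H_1(\partial Y)=0$, conclude each boundary component is $S^2$, cap off with $3$-balls, apply van Kampen to see the resulting closed manifold is simply connected, and invoke the Poincar\'{e} conjecture. The only differences are cosmetic---you make orientability and the universal coefficient step explicit, whereas the paper leaves these implicit.
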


\begin{proof}
	By Lefschetz duality, $H_2(Y, \d Y) \cong H^1(Y) \cong 0$, the latter isomorphism holding since $Y$ is simply connected. The homology long exact sequence of the inclusion $\d Y \to Y$ then shows that $H_1(\d Y) \cong 0$, so that it is a union of copies of $S^2$.
	
	Now gluing in $m$ copies of $D^3$ along the $m$ boundary components produces, by the Van Kampen Theorem, a closed, simply-connected manifold, which by the resolution of the Poincar\'{e} Conjecture is homeomorphic to $S^3$.
\end{proof}

Recall that by Lemma \ref{boundary}, the orbit space $\mstar$ can have at most two boundary components, so we will characterize the orbit spaces on a case-by-case basis according to whether the boundary has one or two connected components.

\begin{lemma}\label{1}
	If $\mstar$ has one boundary component, then it is homeomorphic to $D^3$ and the action has at most two isolated fixed points. If $\estar \neq \emptyset$  then it is an arc between the two isolated points of $\fstar$.
\end{lemma}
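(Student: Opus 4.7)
My plan has three stages. First, the homeomorphism type of $\mstar$ is immediate from the preceding lemma: a simply-connected $3$-manifold with a single boundary component is $S^3$ with one open $3$-ball removed, namely $D^3$.

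Second, I would bound the number $N$ of isolated fixed points by passing to the double. Because $\d\mstar$ is a boundary component, and hence an extremal subset, the gluing theorem recalled in Section \ref{s:prelim} makes the double $D\mstar$ along $\d\mstar$ into a closed almost non-negatively curved Alexandrov $3$-space, homeomorphic to $S^3$. Each isolated fixed point of $\mstar$ lies in the interior with space of directions $X_{s,t} = S^3/S^1$, so it contributes exactly two such points to $D\mstar$; the points of $\d\mstar$ themselves, which become manifold points upon doubling, contribute none. Applying Proposition \ref{p:5bound} to $D\mstar$ yields $2N\leq 5$, and hence $N\leq 2$.

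Third, suppose $\estar\neq\emptyset$. Proposition \ref{p:Fint}(3) states that each arc of $\estar$ has closure with two distinct endpoints in $\fstar\setminus\d\mstar$, so $N\geq 2$; combined with the previous step, $N=2$, and every arc of $\estar$ connects the two isolated fixed points $p_1$ and $p_2$. The local $S^1$-representation at an isolated fixed point with weights $(s_i,t_i)$ allows at most two arcs of $\estar$ to terminate at $p_i$ (one for each singular point of $X_{s_i,t_i}$), so $\estar$ contains at most two arcs. To rule out the remaining bigon configuration and conclude that $\estar$ is a single arc, I would apply the Grove--Wilking double branched cover technique to the extremal simple closed curve $\gamma$ formed by the two arcs together with $p_1$ and $p_2$, viewed inside $D\mstar\cong S^3$. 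Lemma \ref{ram} combined with Proposition \ref{p:5bound} applied to the universal cover bounds $|\pi_1((D\mstar)_2(\gamma))|\leq 2$; Theorem C of \cite{GW} then forces $\gamma$ to be unknotted, and the resulting weighted orbit-space data is excluded by Fintushel's legality conditions \cite{F1} for the simple-connectivity of $M$.

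The main obstacle is this final step: cleanly matching the weights on a potential bigon with the topological information produced by the branched cover so that the two-arc configuration is shown to be inconsistent. By contrast, the upper bound on isolated fixed points and the identification of $\mstar$ with $D^3$ are essentially formal consequences of the structural results already established in Sections \ref{s:prelim} and \ref{s2'}.
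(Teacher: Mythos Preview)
Your first two stages match the paper exactly: identify $\mstar\cong D^3$ via the preceding lemma, then double along $\partial\mstar$ and apply Proposition~\ref{p:5bound} to the resulting closed $3$-space to get $2N\leq 5$.

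For the third stage the paper is considerably more direct, and the missing idea is already implicit in your stage two. When you double $\mstar$, the bigon is duplicated along with the isolated singular points: $D\mstar$ contains \emph{two} extremal closed curves, one in each half, and four singular points $p_1,p_2,p_1',p_2'$. Take the double branched cover over just one of these curves, say $\gamma$. The points $p_1,p_2\in\gamma$ each have a single preimage, while $p_1',p_2'\notin\gamma$ each lift to two copies. The branched cover therefore carries six interior points with small spaces of directions, contradicting Proposition~\ref{p:5bound} outright. No unknottedness argument and no appeal to Fintushel's legality conditions are needed.

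Your detour through the bound $|\pi_1((D\mstar)_2(\gamma))|\leq 2$ and Theorem~C of \cite{GW} arises because you silently drop the mirror singularities $p_1',p_2'$ from the count in the branched cover---the very points you correctly tracked a moment earlier to obtain $2N\leq 5$. Once they are restored, the ``main obstacle'' you flag dissolves: the singularity count alone already rules out the bigon.
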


\begin{proof}
	Since $\mstar$ is a copy of $S^3$ with one $D^3$ removed, it is homeomorphic to $D^3$.
	
	Consider a sequence of $S^1$-invariant metrics $\{g_n\}^\infty_{n=1}$ on $M$ with $\diam(M, g_n) = 1$ and $\curv(M, g_n)\geq -1/n^2$. Each induces an Alexandrov metric on $\mstar$ with $\diam\leq 1$ and $\curv \geq -1/n^2$, and the doubling of these Alexandrov spaces along the boundary produces another sequence of Alexandrov spaces, $(X, \dist_n)$, with $\diam\leq 2$ and $\curv\geq -1/n^2$. 
	
	Therefore $X$ also admits almost non-negative curvature. 
	By Proposition \ref{p:5bound}, for sufficiently large $n$, the metric on $X$ can have at most five points corresponding to isolated fixed points of the action on $M$, and therefore $M$ can contain at most two isolated fixed points.
	
	The set $\estar$ makes up a number of arcs between the isolated points of $\fstar$. Seeking a contradiction, suppose that there are two such arcs, creating a singular closed curve in $\mstar$. In the double, $X$, there are then two such curves, each with two points having small spaces of directions. Taking the double branched cover over one of these curves creates an almost non-negatively curved space with six points having small spaces of directions, violating Proposition \ref{p:4bound}.
\end{proof}

\begin{lemma}\label{2}
	If $\mstar$ has two  boundary components, then it is homeomorphic to $S^2 \times I$ and the action has no isolated fixed points or finite isotropy.
\end{lemma}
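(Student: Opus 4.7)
The plan is to dispose of the topological claim first, and then to deal with the two claims about the action by separate arguments, with the no-isolated-fixed-points claim requiring the most care.

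For the topology, since $\mstar$ is a simply-connected $3$-manifold with two boundary components, the unnamed lemma immediately preceding Lemma \ref{1} shows that $\mstar$ is $S^3$ with two open balls removed, hence homeomorphic to $S^2\times I$.

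For the absence of isolated fixed points, I would argue by contradiction via an iterated doubling. Suppose $\mstar$ contains an interior isolated fixed point, which therefore has space of directions isometric to some $X_{s,t}$. The two boundary components $A,B$ of $\mstar$ are connected components of the Alexandrov boundary, hence codimension-one extremal subsets, so the gluing theorem of Perelman, Petrunin and W\"orner applies. Setting $X_0=\mstar$, I would define $X_{j+1}$ inductively by gluing two copies of $X_j$ along one of its two $S^2$-boundary components via the identity isometry. Each $X_j$ still has exactly two boundary components, so the construction continues indefinitely. The gluing theorem gives $\curv(X_j,\dist_n)\geq -1/n^2$ with $\diam(X_j,\dist_n)\leq 2^j$; rescaling by $2^{-j}$ produces a sequence of diameter $1$ and curvature bound $-4^j/n^2\to 0$, so the rescaled sequence on $X_j$ is almost non-negatively curved. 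The space of directions at a doubled-up boundary point is the double of a hemisphere, namely a round $S^2$, so such points are smooth; consequently the interior points of $X_j$ with spaces of directions isometric to $S^3/S^1$ are exactly the $2^j$ copies of the isolated fixed points originally in the interior of $\mstar$. Choosing $j$ so that $2^j>5$ then contradicts Proposition \ref{p:5bound}.

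For the absence of finite isotropy, Proposition \ref{p:Fint}(3) tells us that every component of $\estar$ is an open arc whose closure has endpoints in $\fstar\setminus\d\mstar$. Having just shown this set to be empty, $\estar$ must be empty as well.

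The main obstacle is the bookkeeping in the iterated doubling, specifically verifying that the doubled-up boundary points do not spuriously contribute to the count of points with $S^3/S^1$ spaces of directions and tracking the rescaling so that the hypothesis of Proposition \ref{p:5bound} genuinely applies to each $X_j$. Once these details are in place, the contradiction with Proposition \ref{p:5bound} for large $j$ is immediate.
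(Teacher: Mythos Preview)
Your proof is correct and follows essentially the same strategy as the paper: both glue several copies of $\mstar$ along boundary components to multiply the interior $X_{s,t}$-singularities beyond the bound of Proposition \ref{p:5bound}, and then conclude $\estar=\emptyset$ from Proposition \ref{p:Fint}(3). The only cosmetic differences are that the paper chains six copies linearly rather than doubling iteratively, and it leaves implicit your verification that the glued boundary points acquire round $S^2$ spaces of directions.
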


\begin{proof}
	As a copy of $S^3$ with two $D^3$s removed, $\mstar$ is homeomorphic to $S^2 \times I$.
	
	Two copies of $\mstar$ may be joined along a common boundary component to create a new space, also homeomorphic to $S^2 \times I$. Let us consider a space $X$ constructed by joining six copies of $\mstar$ in such a manner. Since the diameter is still finite, having increased by a factor of at most six, $X$  again  admits almost non-negative curvature in the Alexandrov sense.
	
	If $\mstar$ had an isolated fixed point, then for each $n$, $X$ would have six points with spaces of directions isometric to some $X_{s,t}$, violating Proposition \ref{p:5bound}. Since $\estar$ only appears as arcs joining isolated points of $\fstar$, there is also no finite isotropy.
\end{proof}

\begin{proposition}\label{FPHcase}
	Let $S^1$  act
	smoothly and effectively  on a closed, smooth, simply-connected $4$-manifold $M$,  admitting an almost non-negatively curved sequence of $S^1$-invariant Riemannian metrics.  If the action fixes a set of codimension two, then there is an invariant metric of non-negative curvature.
\end{proposition}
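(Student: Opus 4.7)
The plan is to combine Lemmas \ref{1} and \ref{2} with Fintushel's reconstruction of smooth effective $S^1$-actions on closed, simply-connected $4$-manifolds from their weighted orbit spaces \cite{F1}, and then match each resulting equivariant model with one already known to carry an $S^1$-invariant non-negatively curved metric from the classification cited before the Main Theorem.

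First I would handle the two-boundary case (Lemma \ref{2}), in which $\mstar \cong S^2 \times I$ with both ends fixed and no interior singularities. The fixed set $F \subset M$ then consists of two embedded $2$-spheres $F_+$ and $F_-$, and $M = D(F_+) \cup D(F_-)$ is obtained by gluing their equivariant tubular neighborhoods along a principal $S^1$-bundle over $S^2$. The equivariant diffeomorphism type is determined by the pair of Euler numbers of these disk bundles; a finite enumeration subject to $M$ being simply connected yields precisely $S^4$, $S^2 \times S^2$, and $\ccc P^2 \# \pm \ccc P^2$, each of which appears on the non-negatively curved list.

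Next I would treat the one-boundary case (Lemma \ref{1}), where $\mstar \cong D^3$ with $\partial \mstar \subset \fstar$ and at most two additional isolated fixed points, joined if present by a single arc of $\estar$. Fintushel's formulae for $\pi_1(M)$ in terms of the Seifert weights on that arc and the Euler number of the principal bundle over the interior of $\mstar$ \cite{F1}, together with $\pi_1(M)=0$, cut down to a short explicit list of admissible weighted orbit spaces; each reconstructs equivariantly to $S^4$, $\ccc P^2$, or one of $\ccc P^2 \# \pm \ccc P^2$, carrying a known $S^1$-invariant non-negatively curved metric that transports back to $M$.

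The main obstacle I anticipate is confirming that the simple-connectivity constraint really cuts the admissible weighted orbit spaces down to exactly those models and not to additional Seifert-type $4$-manifolds. I would handle this by direct calculation using the presentation of $\pi_1(M)$ from \cite{F1}, making essential use of the fact that some component of $\fstar$ is $2$-dimensional: the circle acts with slice representation of weight $\pm 1$ on the equivariant tubular neighborhood of that component, which sharply restricts the remaining orbit-space data and matches them to those of the known non-negatively curved actions.
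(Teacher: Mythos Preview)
Your proposal is correct and begins from the same two lemmas as the paper, but the paper's proof is much shorter: having obtained from Lemmas \ref{1} and \ref{2} that $\mstar$ is either $D^3$ with at most two interior isolated fixed points and a possible arc of finite isotropy, or $S^2 \times I$ with no interior singularities, the paper simply observes that these are exactly the orbit-space configurations appearing in Galaz-Garc\'ia's classification \cite{GG1} of fixed-point-homogeneous circle actions on non-negatively curved, simply-connected $4$-manifolds, and invokes that reference directly for the equivariant identification and the invariant non-negative metric. Your route via Fintushel's reconstruction and an explicit $\pi_1$ computation would work, but it amounts to reproving the relevant portion of \cite{GG1}; the payoff is a more self-contained argument, at the cost of the case analysis you yourself flag as the main obstacle. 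The paper trades that labor for a single citation.
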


\begin{proof}
	By Lemmas \ref{1} and \ref{2}, the orbit space is either homeomorphic to (i) $D^3$ with up to two isolated fixed points and a possible arc joining them representing finite isotropy, or (ii) $S^2 \times I$ with no isolated fixed points or finite isotropy.
	
	These orbit spaces appear in the classification of simply-connected, non-negatively curved 4-manifolds with fixed-point-homogeneous circle actions in \cite{GG1}, and therefore they arise from actions equivariantly diffeomorphic to those described there.
\end{proof}



\end{document}